\documentclass[journal]{IEEEtran}

\usepackage{graphicx}
\usepackage{calc}
\usepackage{amstext}
\usepackage{amsthm}
\usepackage{multicol}
\usepackage{times}
\usepackage{times} 
\usepackage{pdfsync} 
\usepackage{epsfig}
\usepackage{epstopdf}
\usepackage{amsmath} 
\usepackage{amssymb}
\usepackage{color} 
\usepackage{cite} 
\usepackage{array}
\usepackage{mathrsfs}
\usepackage{fix2col} 

\begin{document}
%
\title{Sequence-based Anytime Control\thanks{A
    preliminary version of parts of this
    work was
    presented  at the 49th IEEE
Conference on Decision and Control, see \cite{gupque10b}.}}
\author{Daniel E.~Quevedo,~\IEEEmembership{Member,~IEEE,}\thanks{D.\ Quevedo
    is with the School of Electrical Engineering \& Computer Science, The
    University of Newcastle, Australia: {\tt dquevedo@ieee.org}}, Vijay
  Gupta,~\IEEEmembership{Member,~IEEE}\thanks{V.\ Gupta is with 
    the Department of Electrical Engineering, University of Notre Dame: {\tt
      vgupta2@nd.edu}} }

\markboth{IEEE Transactions on Automatic Control}%
{Quevedo and Gupta: Sequence-based Anytime Control}

\maketitle


\newcommand{\eq}{\triangleq}
\newcommand{\ejw}{e^{j\omega}} 
\newcommand{\dw}{d\omega}
\newcommand{\st}{^\star} 
\newcommand{\tr}{\mathrm{tr}}
\newcommand{\diag}{\mathrm{diag}}

\newcommand{\Prob}{\mathbf{Pr}}
\newcommand{\E}{\mathbf{E}}

\newcommand{\field}[1]{\mathbb{#1}} 
\newcommand{\R}{\field{R}}
\newcommand{\N}{\field{N}} 
\newcommand{\U}{\field{U}} 
\newcommand{\X}{\field{X}}
\newcommand{\B}{\field{B}} 
\newcommand{\Z}{\field{Z}}
\newcommand{\G}{\field{G}} 
\newcommand{\Sset}{\field{S}}
\newcommand{\K}{\mathcal{K}}

\newcommand{\hfs}{\hfill\ensuremath{\square}} 
\newcommand{\bs}{\boldsymbol}
\newcommand{\uc}{\mathrm{uc}}

\newtheorem{thm}{Theorem} 
\newtheorem{defi}{Definition}
\newtheorem{coro}{Corollary} 
\newtheorem{lem}{Lemma} 
\newtheorem{rem}{Remark}
\newtheorem{ex}{Example} 
\newtheorem{ass}{Assumption} 
\graphicspath{{Matlab/}}
\allowdisplaybreaks

\begin{abstract}
  We present two related anytime algorithms for control of
  nonlinear systems  when the
  processing resources available are time-varying. The basic idea is to
  calculate tentative control input sequences for as many time steps into the
  future as allowed 
  by the available processing resources at every time step. This serves to
  compensate for the time steps when the processor is not available to perform
  any control calculations.   Using a stochastic Lyapunov function based
  approach, we analyze the 
  stability of the resulting closed loop system for   the cases when the
  processor availability can be modeled as an independent and identically
  distributed sequence and via an underlying Markov chain.  Numerical
  simulations indicate  that the increase in performance due to the proposed
  algorithms can 
  be significant. 
\end{abstract}


\section{Introduction}
A lot of recent attention has  focused on networked and embedded control
(see, e.g., the special issue~\cite{ab07} and the references therein). One
issue which 
plays an important role, especially in embedded systems, is that of time-varying
and limited processing power. As more and more objects are equipped with
micro-processors that are responsible for multiple functions such as control,
communication, data fusion, system maintenance and so on, the implicit
assumption traditionally made in control design about the processor being able to
execute the desired control algorithm at any time will break down. Similarly, if
a remote controller is in charge of many devices, multiple control tasks will compete
for shared processor resources, leading to constrained availability of
processing resources for the individual control loops. It is, thus, of interest
to study control algorithms that can function despite limited and time-varying
availability of 
processing power.  There is a growing number of works that deal with
this issue. The
impact of finite computational power has been looked at most closely for
techniques such as model predictive  control. McGovern and
Feron~\cite{MF98,MF99} presented bounds on computational time for achieving
stability for specific optimization algorithms, if the processor has constant,
but limited, computational resources. Henriksson et al~\cite{ha04,hcaa02}
studied the effect of not updating the control input in continuous time systems
for the duration of the computational delay for optimization algorithms based on
active set methods. Also related are works on event-triggered and self-triggered
control systems, and online sampling, e.g.,~\cite{t07,wl07,vmb09,cvmc10}, where a
control input is calculated aperiodically, but on demand, depending on the
plant state. In addition, we would like to mention  work on scheduling
of control tasks~\cite{cbs02,CEBA02,slss96} that looks at the problem of
processor queue scheduling, when control calculation is merely one of the tasks
in the queue.

\par An alternative approach to
achieve system robustness in the presence of
time-varying processing resources is to develop {\em anytime}
algorithms. The main
purpose of anytime algorithms is to
provide \emph{a} solution even with limited processing resources, and to refine the
solution as more resources become available. Anytime algorithms seek to make
efficient use of resources and are, thus, popular
in the context of real-time systems. 
In control, however, there are few methods available for developing
anytime controllers. A notable work is that of Bhattacharya et al~\cite{bb04}
who focused on linear systems, and presented a control algorithm that
updated a different number of states depending on the available computational
time. However, the available computational time was required to be known to the
controller a priori. Another important work is that of Greco et al~\cite{GFB07},
who proposed switching among a pre-designed set of controllers that may require
different execution times. Although the idea can be generalized to nonlinear
processes, the analysis in the paper relied on Markovian jump linear system
theory. In Gupta and Luo~\cite{g10}, an anytime algorithm for systems with multiple
inputs was presented. The main idea was based on calculating the components of the
control vector sequentially, and refining the process model as more processing
time becomes available. Since the algorithm is based on identifying the modes of the
process that require more urgent control, it is, thus, again largely limited
to linear processes.

\par In the present work, we present two anytime control algorithms for nonlinear
plants described in state-space form that are based on using extra
processor availability 
to calculate sequences which have the potential to be
implemented at the plant input at future times.  This  safeguards
 performance at those time steps where the
processor is entirely unavailable for control.  
Availability of processor time for control  calculations determines the length of
the sequences calculated and, thereby, affects the quality of the result. A distinguishing
feature of the algorithms presented is that processor availability is allowed to
be  random, with unknown distribution. Moreover, our algorithms  are one of the first  that are suitable for nonlinear plants.
 For cases where processor availability is governed by a suitable Markov
Chain,   we use Lyapunov functions
to establish sufficient conditions for  stochastic 
stability of the closed loop.  
Numerical simulations illustrate that performance gains achieved with the
algorithms proposed can be significant.

\par It is worth emphasizing that in the algorithms
presented, the potential control 
values are calculated sequentially, reutilizing the already computed values for the
next computation. This is computationally attractive, especially since the
length of the sequence to be calculated is time-varying and not known
a-priori. Thus, our approach differs significantly
from the methods used in
packetized predictive control, e.g.,
in~\cite{munchr08,tansil06,pinpar08,finvar08,quenes11a,queost11,quenes12a,quesil08}. In the
latter works calculation of control sequences requires solving
optimization problems over a finite horizon of length determined by the
controller itself.

\par The remainder of this manuscript is organized as follows: In
Section~\ref{sec:prob_form} we 
formulate the anytime control design problem studied. 
Section~\ref{sec:our_description} presents the proposed algorithms. Stochastic
stability analysis   is carried out in
Sections~\ref{sec:analysis} to~\ref{sec:mark-chain-proc}.
 Numerical
simulations are documented in
Section~\ref{sec:numerical_examples}. Section~\ref{sec:conclusions} draws the final conclusions.

\paragraph*{Notation}
\label{sec:notation}
We write $\N$ for $\{1, 2,  \ldots\}$ and $\N_0$ for $\N \cup \{0\}$. $\R$ represents the real numbers and
$\R_{\geq 0}\eq [0,\infty)$. The $p\times p$ identity matrix is
denoted via $I_p$,  $0_{p \times q}$ is the $p\times q$  all-zeroes
matrix, $0_p\eq 0_{p\times p}$, and $\mathbf{0}_p\eq 0_{p\times 1}$.  The notation
$\{x\}_{\K}$ stands for 
$\{x(k) \;\colon k \in \K\}$, where $\K\subseteq \N_0$.
We  adopt the conventions
$\sum_{k=\ell_1}^{\ell_2}a_k = 0$ and $\prod_{k=\ell_1}^{\ell_2}a_k = 1$,  if $\ell_1 > \ell_2$ and irrespective of
$a_k \in \R$. The superscript $^T$  refers to transpose. The
Euclidean norm of 
a vector $x$ is denoted via $|x|=\sqrt{x^Tx}$.
 A
function $\varphi\colon \R_{\geq 0}\to \R_{\geq 0}$ is of
\emph{class-}$\mathscr{K}_\infty$ ($\varphi \in \mathscr{K}_\infty$), if it is
continuous, zero at zero, strictly increasing, and  unbounded. The probability of an event
$\Omega$ is denoted by 
$\Prob\{\Omega \}$ and the conditional probability of $\Omega$ given
 $\Gamma$ by $\Prob\{\Omega\,|\,\Gamma \}$. The  expected value of a 
 random variable $\nu$ given 
 $\Gamma$ is denoted by  $\E\{\nu  \,|\, \Gamma \}$ while $\E\{\nu\}$ refers 
 to the
 unconditional 
 expectation. 

\section{Problem Formulation}
\label{sec:prob_form}
We consider
 nonlinear (and possibly unstable) plants sampled periodically with sampling interval
  $T_s>0$ and described in discrete-time via:
\begin{equation}
  \label{eq:process}
  x(k+1) = f(x(k),u(k),w(k)),\quad k\in\N_0.
\end{equation}
In~\eqref{eq:process}, $x\in \R^n$ is the
   plant state,  $u\in \R^p$ is the plant input, and  $w\in\R^m$  is an
   unmeasured disturbance. The 
model~\eqref{eq:process} satisfies  
 $f(\mathbf{0}_n,\mathbf{0}_p,\mathbf{0}_m)=\mathbf{0}_n$ and the initial state, $x(0)$, is arbitrarily distributed
(with possibly unbounded support). 

\par Throughout this work, we will assume that the unperturbed plant model 
\begin{equation}
      \label{eq:15}
      x(k+1) =  f(x(k),u(k),\mathbf{0}_m)
    \end{equation} 
is globally stabilizable via state feedback. More precisely, we make the
following assumption: 
\begin{ass}
\label{ass:CLF}
There exist functions $\varphi_1, \varphi_2\in\mathscr{K}_\infty$, $V\colon
\R^n\to\R_{\geq 0}$,   $\kappa \colon \R^n\to \R^p$, and a constant $\rho \in [0,1)$,
 such that for all $x\in\R^n$,
\begin{equation}
  \label{eq:3}
  \begin{split}
    \varphi_1(|x|)\leq V(x)&\leq \varphi_2(|x|)\\
    V(f(x,\kappa(x),\mathbf{0}_m)) &\leq \rho V(x).
  \end{split}
\end{equation}
 \hfs
\end{ass}

When implementing discrete-time control systems it is generally  assumed that
the processing resources available to the
controller are such that the  
control law can always be evaluated within
a fixed (and small) time-delay, say $\delta\in (0,T_s)$.\footnote{Recall that
  fixed delays can be easily incorporated into the model~(\ref{eq:process}) by aggregating
  the previous plant input to the plant state, see also\cite{nilber98}. For ease
  of exposition, throughout this work, we will 
 use the standard discrete-time notation as in~(\ref{eq:process}).} However, in practical 
networked and embedded systems, the processing resources (e.g., processor execution times)  
available for control calculations may vary and, at times, be insufficient to
generate a control input within the prescribed time-delay $\delta$. 
 One possible
remedy for this issue would to redesign the control system for a worst case by
choosing larger values of $\delta$ and, possibly, $T_s$. Clearly, such an approach
will, in general, lead to unnecessary conservativeness and associated poor
performance. In the present work we adopt an anytime control
paradigm to seek  favorable trade-offs between processor availability and
control performance.

\par Before proceeding we note that a direct implementation of the control policy used in
Assumption~\ref{ass:CLF}, when processing 
resources are time varying,  results in a {\em baseline}
algorithm, which gives rise to the plant input: 
\begin{equation}
  \label{eq:4}
  u(k)=
  \begin{cases}
    \kappa(x(k)), &\text{if sufficient computational resources to}\\
       &\text{evaluate $\kappa(x(k))$ are available
      between}\\
      &\text{ $kT_s$ and $kT_s+\delta$,}\\
    \mathbf{0}_p, &\text{otherwise,}
  \end{cases}
\end{equation}
where the symbol
 $u(k)$ with $k\in\N_0$ denotes  the plant input which is applied during the  interval\footnote{If
  sufficient computational 
  resources are not available, then one could alternatively hold the previous
  control value and set $u(k) = u(k-1)$. The situation mirrors that encountered when the
  control input is affected by
   dropouts; see, e.g., \cite{s09}.}
 $[kT_s +\delta, (k+1)T_s +\delta)$.
Whilst the baseline algorithm~(\ref{eq:4}) is  intuitive and simple,
it is by no means clear that it cannot be outperformed by
more elaborated control formulations. In the following section, we will present two related anytime control algorithms for
the plant model~\eqref{eq:process}. The aim is to make more efficient use of the
processing resources
available for control, when compared to the baseline algorithm~(\ref{eq:4}).


\section{Anytime Control Through Calculation of Control Sequences}
\label{sec:our_description}
 Throughout this work, we will assume that the controller needs processor time to carry out
mathematical computations, 
such as evaluating functions. However, simple operations at a bit level, such as writing data into buffers, shifting buffer
contents and setting values to zero, do not require processor time. Similarly, 
input-output operations, i.e.,  
A/D and D/A conversion are triggered by external asynchronous loops with a
real-time clock and 
do not require that the processor be available for control. As in regular
discrete-time control, these external loops
ensure that state measurements are available at the instants
$\{kT_s\}_{k\in\N_0}$ and that the controller outputs are passed on to the plant
actuators at times $\{kT_s+\delta\}_{k\in\N_0}$, where $\delta$ is fixed; see, e.g., \cite{astwit90}

\par A standing assumption is that if  the processor were
fully available for control, then calculating the desired plant input $u(k)$ for
a given plant 
state $x(k)$ would be possible within the pre-allocated time-frame
$t\in (kT_s,kT_s+\delta)$. Issues arise when, at times,  processor  availability
does not permit the desired plant input to be calculated. To take care of the associated
performance loss, in the present work we propose to use one
of the   two anytime control algorithms presented below. 

\subsection{Algorithm Descriptions}
\label{sec:algor-descr}
Both algorithms are based on the following basic idea: At time intervals when the 
controller is provided with more processing resources than are needed to evaluate
the current control input, the algorithm  calculates a  sequence of tentative
future plant  inputs, say $\vec{u}(k)$.  The sequence is stored in a local
buffer and may be  used 
when, at some future time 
steps, the  processor  
availability precludes any control calculations, see Fig.~\ref{fig:algorithm}.

\par For further reference, we denote the buffer states via $\{b\}_{\N_0}$, where 
\begin{equation}
  \label{eq:12}
  b(k) =
  \begin{bmatrix}
    b_1(k)\\b_2(k)\\\vdots \\ b_{\Lambda}(k)
  \end{bmatrix}\in\R^{\Lambda  p},\quad k\in\N_0
\end{equation}
for a given value $\Lambda\in\N$ and where each $b_j(k)\in\R^p$, $j\in\{1,\dots,\Lambda\}$.
We also  
introduce the shift matrix $S$ and the unit vector $e_1$ via:
 \begin{align}
\label{eq:21}
    S&\eq
\begin{bmatrix}
    0_p & I_p& 0_p &\hdotsfor{1} &0_p\\
    \vdots & \ddots & \ddots &\ddots  & \vdots\\
     0_p &  \dots      &  0_p &I_p  & 0_p\\
    0_p & \hdotsfor{2}       &  0_p & I_p\\
 0_p &\hdotsfor{3} &  0_p
\end{bmatrix}\in\R^{\Lambda  p\times \Lambda p},\\
\nonumber
e_1 &\eq
\begin{bmatrix}
  I_p\\0_p\\ \vdots \\0_p
\end{bmatrix}
\in\R^{\Lambda  p\times  p}.
\end{align}

\begin{figure}[t]
  \centering
  \input{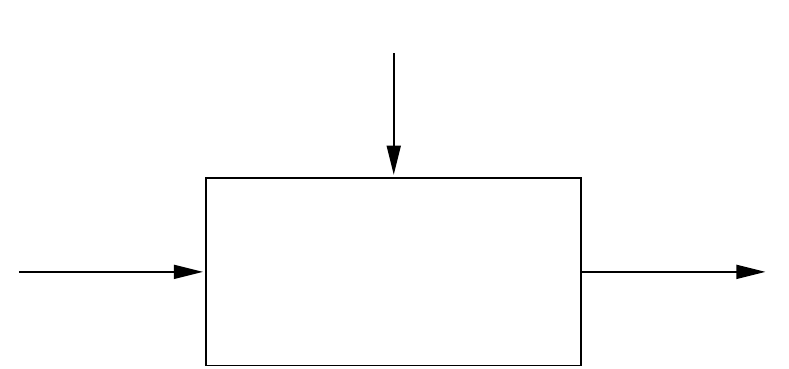_t}
  \caption{Anytime control structure with internal buffer state $b(k)$.}
  \label{fig:algorithm}
\end{figure}

\linespread{1.3}
\begin{figure}[h!]
\noindent\rule{\linewidth}{0.2mm}\vspace{-2mm}
\begin{enumerate}[\setlabelwidth{Step 5}]
\item[Step 1:]  At time $t=0$, 
  \par\hspace{1cm} \textsc{set}  $b(-1)\leftarrow \mathbf{0}_{\Lambda  p}$, $k\leftarrow 0$
\item[Step 2:] \label{step:timek}
  \textsc{if}  $t \geq k T_s$,  \textsc{then}
  \par\hspace{1cm}  \textsc{input} $x(k)$;
  \par\hspace{1cm}  \textsc{set}  $\chi\leftarrow x(k)$, $j\leftarrow 1$,
   $b(k)\leftarrow Sb(k-1)$;\\ 
  \textsc{end}

\item[Step 3:] \label{step:repeat}
  \textsc{while} ``sufficient processor time is available'' 
  and  $j\leq \Lambda$ and time $t < (k+1) T_s$,
  \par\hspace{1cm} \textsc{set} $v \leftarrow V(\chi)$, 
  where $V$ is the Lyapunov  function in~(\ref{eq:3});
  \par \hspace{1cm}  Use $v$ and $\chi$ to find $u_j(k)$, such that
  \begin{equation}
    \label{eq:14}
    V(f(\chi,u_j(k),\mathbf{0}_m))\leq \rho v;
  \end{equation}
  
  \par\hspace{1cm} \textsc{if} $j=1$, \textsc{then}
  \par \hspace{2cm} \textsc{output} $u_{1}(k)$;
  \par \hspace{2cm} \textsc{set} $b(k)\leftarrow\mathbf{0}_{\Lambda  p}$;
  \par \hspace{1cm} \textsc{end}
  
  \par \hspace{1cm} \textsc{set} $b_j(k)\leftarrow u_j(k)$;
  
  \par \hspace{1cm} \textsc{if} ``sufficient processor time is not available'' or
  $t \geq (k+1) T_s$, \textsc{then}
  \par \hspace{2cm} \textsc{goto} Step 5
  \par \hspace{1cm} \textsc{end}
  
  \par \hspace{1cm} \textsc{set} $\chi \leftarrow  f(\chi,u_j(k),\mathbf{0}_m)$,
  $j\leftarrow j+1$;\\
  \textsc{end}
  
\item[Step 4:] 
  \textsc{if} $j=1$, \textsc{then}
  \par\hspace{1cm} \textsc{output} $b_{1}(k)$;\\
  \textsc{end}

\item[Step 5:] 
  \textsc{set} $k \leftarrow k+1$ and \textsc{goto} Step 2;
\end{enumerate}
\vspace{-3mm}
\noindent\rule{\linewidth}{0.2mm}
\caption{Algorithm A$_1$}
\label{alg:1}
\end{figure}
\linespread{1}

\par Algorithm A$_1$ is presented in Fig.~\ref{alg:1}. It can be seen that the 
algorithm proposed amounts to a dynamic state
feedback policy with internal state variable $b(k)$. The latter provides 
\begin{equation}
  \label{eq:17}
  u(k) = e_1^T b(k)=b_{1}(k),
\end{equation}
 and suggested plant 
inputs at future time steps. At the time steps when more processor time is
available, a longer suggested trajectory of  control inputs is calculated and
stored in the buffer.\footnote{Note that, by
    Assumption~\ref{ass:CLF}, in Step 3, one could simply set $u_j(k)\leftarrow
    \kappa(\chi)$.}  If the buffer runs out of tentative plant inputs (as
calculated in Step 3), then  the actuator values are set to zero.
 With Algorithm $\textsc{A}_{1}$, as soon as the processor
calculates a control input $u_0(k)$, it throws away the remaining elements in
the buffer, see  line ``$b(k)\leftarrow\mathbf{0}_{\Lambda   p}$'' in Step 3. 

\linespread{1.3}
\begin{figure}
\noindent\rule{\linewidth}{0.2mm}\vspace{-2mm}
\begin{enumerate}[\setlabelwidth{Step 3}]
\item[Step 3:] \label{step:repeat2}
  \textsc{while} ``sufficient processor time is available'' and  $j\leq \Lambda$ and
  time $t < (k+1) T_s$,
  
  \par\hspace{1cm} \textsc{set} $v \leftarrow V(\chi)$,  where $V$ is the Lyapunov
  function in~(\ref{eq:3});
  
  \par\hspace{1cm}   Use $v$ and $\chi$ to find $u_j(k)$, such that
  $V(f(\chi,u_j(k),0))\leq \rho v$;
  
  \par\hspace{1cm} \textsc{if} $j=1$, \textsc{then}
  \par\hspace{2cm} \textsc{output} $u_{1}(k)$;
  \par\hspace{1cm} \textsc{end}
  
  \par\hspace{1cm} \textsc{set} $b_j(k)\leftarrow u_j(k)$;
  
  \par\hspace{1cm}  \textsc{if} ``sufficient processor time is not available'' or
  $t \geq (k+1) T_s$, \textsc{then}
  \par\hspace{2cm} \textsc{goto} Step 5;
  \par\hspace{1cm} \textsc{end}
  
  \par\hspace{1cm} \textsc{set} $ \chi \leftarrow  f(\chi,u_j(k),\mathbf{0}_m)$,
  $j\leftarrow j+1$;\\
  \textsc{end}
\end{enumerate} 
\vspace{-3mm}
\noindent\rule{\linewidth}{0.2mm}
\caption{Step 3 of Algorithm A$_2$}
\label{alg:2}
\end{figure}
\linespread{1}

\par Algorithm $\textsc{A}_{2}$ is almost identical to the first algorithm,
$\textsc{A}_{1}$. The only 
difference is that, in Step 3, the buffer contents
are never re-set to zero, i.e., the line ``$b(k)\leftarrow\mathbf{0}_{\Lambda 
  p}$''
 is eliminated, see Fig.~\ref{alg:2}. Thus, if Algorithm $\textsc{A}_{2}$ is
 used, then buffer
elements may stem from calculations carried out at different time instants.
By not deleting the entire buffer, but only
replacing the \emph{appropriate} entries, when using $\textsc{A}_2$ the buffer will
run out of data less often than when using $\textsc{A}_1$. 

\par It is worth noting that neither algorithm requires prior knowledge of future
 processor  availability for control.  This opens 
the possibility to  employ the algorithms  in shared systems, where the controller 
task can be preempted by other computational tasks carried out by the processor,
see also \cite{cacbak05,lhll05,zhs99}.  As in  other anytime algorithms, there exists a compromise between resultant
 closed loop
 performance and the processor availability. Understanding this trade-off forms
 the bulk of this work.


\subsection{Basic Properties}
\label{sec:basic-properties}
With the algorithms presented in Section~\ref{sec:algor-descr},  extra
processing time is used to calculate additional elements of the 
tentative plant input sequences, thus, providing 
higher quality results, i.e.,  sequences $\vec{u}(k)$ which better safeguard 
  against performance loss at \emph{future} time instances where  processor
  availability may be insufficient.
To further elucidate the situation, we note that in both algorithms, during each iteration of
the while-loop in Step~3, the state value $x(k)$ is used to calculate a
tentative control, namely $u_j(k)$. In the sequel, we will 
denote by
$N(k)$ the total number of iterations of the while-loop which are carried out during the
interval $t\in (kT_s,(k+1)T_s)$ and note that $N(k)\in\{0,1,\dots,\Lambda\}$. Thus,
if $N(k)\geq 1$, then the entire sequence of tentative controls is
\begin{equation}
  \label{eq:4b}
  \vec{u}(k) = 
  \begin{bmatrix}
    u_{1}(k)\\u_{2}(k)\\ \vdots\\ u_{N(k)}(k)
  \end{bmatrix}\in\R^{N(k)\cdot p}.
\end{equation}
If $N(k)=0$, then the processor was not available for  control, and (with {either of the}
algorithms) the actuator values are
taken as the first $p$ elements of the shifted state $b(k)=Sb(k-1)$, see~\eqref{eq:21}.

\par In terms of the notation introduced above and in~(\ref{eq:21}), if Algorithm A$_1$ is used, then the
buffer $b(k)$ obeys the recursion: 
\begin{equation}
  \label{eq:1}
  b(k)=
  \begin{cases}
    Sb(k-1), &\text{if $N(k)=0$,}\\
    \begin{bmatrix}
      \vec{u}(k)\\
      \mathbf{0}_{(\Lambda -N(k)) p}
    \end{bmatrix},
&\text{if $N(k)\geq 1$.}
  \end{cases}
\end{equation}
On the other hand, if Algorithm A$_2$ is used, then  we have:
\begin{equation}
  \label{eq:18}
  b(k)=
  \begin{cases}
    Sb(k-1), &\text{if $N(k)=0$,}\\
    \begin{bmatrix}
      \vec{u}(k)\\
      \mathbf{0}_{(\Lambda -N(k)) p}
    \end{bmatrix}
    + M_{N(k)} b(k-1),
    &\text{if $N(k)\geq 1$,}
  \end{cases}
\end{equation}
where
\begin{equation}
  \label{eq:44}
  M_i\eq \big(I_{\Lambda  p}- D_i \big) S,
\end{equation}
with
\begin{equation}
\label{eq:45}
    D_i \eq
    \begin{cases}
      \diag ( I_{i p}, 0_{(\Lambda-i) p}), &\text{if $i\in
        \{1,2,\dots,\Lambda-1\}$}\\     
       I_{\Lambda p}, &\text{if $i = \Lambda$.}
     \end{cases}
   \end{equation}

\par In addition to studying the length of the tentative control sequences provided by the
algorithms, namely $\{N\}_{\N_0}$, it is convenient to investigate how many 
values which stem from the tentative control sequences $\{\vec{u}(k-\ell)\}$,
$\ell \in\N_0$ are contained in the
buffer state $b(k)$. We will refer to this value as the \emph{effective buffer
  length} (at time $k$), and denote it as
\begin{equation}
  \label{eq:5}
  \lambda (k) \in\{0,1,\dots,\Lambda\}, \quad k\in\N_0
\end{equation}
with $\lambda(-1)=0$. It is easy to see that, if
Algorithm $\textsc{A}_1$ is used, then 
$\{\lambda\}_{\N_0}$ is governed by
\begin{equation}
  \label{eq:19}
  \lambda(k)=
  \begin{cases}
    N(k), &\text{if $N(k)\geq 1$},\\
    \max\{\lambda(k-1)-1,0\},  &\text{if $N(k)=0$},
  \end{cases}
\end{equation}
whereas, with Algorithm $\textsc{A}_2$,
we have
\begin{equation}
  \label{eq:9}
  \lambda(k) = \max\{N(k), \lambda(k-1)-1\},\quad k\in \N_0.
\end{equation}
The following example illustrates the quantities introduced above:

\begin{ex}
\label{ex:one}
 Suppose that $\Lambda = 5$ and that the processor availability for control is such that
$N(0)=5$, $N(1)=0$, $N(2)=1$, $N(3)=0$.
 If Algorithm A$_1$ is used, then the buffer state at times $k\in\{0,1,2,3\}$ becomes:
\begin{align*}
  b(0)=
    \begin{bmatrix}
    u_{{1}}(0)\\u_{{2}}(0)\\ u_{{3}}(0)\\ u_{{4}}(0)\\u_{{5}}(0)
  \end{bmatrix},&\qquad b(1)=
\begin{bmatrix}
    u_{{2}}(0)\\ u_{{3}}(0)\\ u_{{4}}(0)\\u_{{5}}(0)\\ \mathbf{0}_{p}
  \end{bmatrix},\\ b(2)=
\begin{bmatrix}
    u_{{1}}(2)\\ \mathbf{0}_{p}\\ \mathbf{0}_{p}\\ \mathbf{0}_{p} \\ \mathbf{0}_{p}
  \end{bmatrix},&\qquad b(3)=
\begin{bmatrix}
    \mathbf{0}_{p} \\ \mathbf{0}_{p} \\  \mathbf{0}_{p}\\ \mathbf{0}_{p}\\ \mathbf{0}_{p}
  \end{bmatrix},
\end{align*}
%
%
which gives $\lambda(0)=5$, $\lambda(1)=4$, $\lambda(2)=1$, $\lambda(3)=0$,  and  plant inputs
$u(0)= u_{{1}}(0)$, $u(1)=u_{{2}}(0)$, $u(2)=u_{{1}}(2)$, $u(3)=\mathbf{0}_{p}$.
 On the other hand, if Algorithm A$_2$ is used, then we have
\begin{align*}
  b(0)=
    \begin{bmatrix}
    u_{{1}}(0)\\u_{{2}}(0)\\ u_{{3}}(0)\\ u_{{4}}(0)\\u_{{5}}(0)
  \end{bmatrix}, &\qquad b(1)=
\begin{bmatrix}
    u_{{2}}(0)\\ u_{{3}}(0)\\ u_{{4}}(0)\\u_{{5}}(0)\\ \mathbf{0}_{p}
  \end{bmatrix},\\b(2)=
\begin{bmatrix}
    u_{{1}}(2)\\ u_{{4}}(0)\\ u_{{5}}(0)\\ \mathbf{0}_{p} \\ \mathbf{0}_{p}
  \end{bmatrix},&\qquad b(3)=
\begin{bmatrix}
    u_{{4}}(0) \\ u_{{5}}(0) \\  \mathbf{0}_{p}\\ \mathbf{0}_{p}\\ \mathbf{0}_{p}
  \end{bmatrix} 
\end{align*}
and $\lambda(0)=5$, $\lambda(1)=4$, $\lambda(2)=3$, $\lambda(3)=2$,  
$u(0)= u_{{1}}(0)$, $u(1)=u_{{2}}(0)$, $u(2)=u_{{1}}(2)$, $u(3)=u_{{4}}(0)$.
%
Note that with Algorithm
A$_1$, $\lambda(3)=0$ and therefore  the plant input at time $k=3$ is set to
zero; with Algorithm A$_2$, the  
value calculated at time $k=0$ is used.  \hfs
\end{ex}

\par 

\section{Stochastic Stability of Anytime Control Algorithms}
\label{sec:analysis}
 Since the  processor availability  for control calculations is
random, the plant input is random, and thus the 
system~(\ref{eq:process}) evolves stochastically.  Various stability notions for
stochastic systems have been studied in the literature
(e.g.,~\cite{jcfl91,k71}). In the present work, we are interested in the following notion:
\begin{defi}[Stochastic Stability]
  A dynamical system with state trajectory $\{x\}_{\N_0}$ is said to be stochastically stable, if
  \begin{equation}
    \label{eq:33}
    \sum_{k={0}}^{\infty}\E\big\{\varphi(|x(k)|)\big\} <\infty,
  \end{equation}
  for some $\varphi \in\mathscr{K}_\infty$.\hfs
\end{defi}

\begin{rem}
  \label{rem:stabnotions}
  It follows directly from~(\ref{eq:33}), that
  stochastic stability implies that
  there exists  $\varphi \in\mathscr{K}_\infty$, such that:
  \begin{equation}
    \label{eq:34}
    \lim_{k\to\infty} \E \big\{\varphi(|x(k)|)\big\} =0.
  \end{equation}
 In the particular case where $\varphi(s)=s^2$,~(\ref{eq:33})
reduces to $\sum_{k={0}}^{\infty}\E \{|x(k)|^2\} <\infty$, and~(\ref{eq:34}) to
 $\lim_{k\to\infty} \E \{|x(k)|^2\} =0$; see also~\cite{fanlop02,jcfl91}. 
\hfs
\end{rem}

\subsection{Assumptions}
\label{sec:assumptions}
 Our subsequent stability analysis considers the 
unperturbed system~\eqref{eq:15}, i.e., 
where $w(k)=0$, for all $k\in\N_0$. For pedagogical ease, we also begin by presenting the analysis with the additional assumption 
that the processor availability for control is  independent and
  identically distributed (i.i.d.). Thus, for the analysis in 
 Sections~\ref{sec:algorithm-a_1} and~\ref{sec:algorithm-a_2}, we make the following assumption:

\begin{ass}
\label{ass:iid}
  The process $\{N\}_{\N_0}$ introduced in Section~\ref{sec:basic-properties} is
  i.i.d., with probability distribution
  \begin{equation}
    \label{eq:7}
    \Prob \{N(k)=l\} =p_l,
  \end{equation}
  where $l\in\{0,1,2,\dots, \Lambda\}$ and with $p_0\in[0,1)$.\hfs
\end{ass}
In Section~\ref{sec:mark-chain-proc}, we will show how to extend this analysis for the case when the processor availability can be described by a Markov chain, and thus has memory.

\par Assumption~\ref{ass:bound_prob} stated below, bounds the rate of increase
of the Lyapunov function $V$ in~(\ref{eq:3}), when the 
  nominal system~\eqref{eq:15} is run in open-loop. It also imposes a (mild) restriction on
  the distribution of the  initial plant state.
\begin{ass}
  \label{ass:bound_prob}
  There exists $\alpha \in [1,1/p_0)$ 
such that
  \begin{equation}
    \label{eq:20}
    V({f}(x,\mathbf{0}_p,\mathbf{0}_m))\leq\alpha V(x),\quad\forall x \in\R^n.
  \end{equation}
The initial plant state satisfies
\begin{equation}
  \label{eq:66}
  \E\big\{\varphi_2(|x(0)|)\big\}<\infty,
\end{equation}
 where
$\varphi_2\in\mathscr{K}_\infty$ is as in~(\ref{eq:3}).\hfs
\end{ass}
It is worth emphasizig that the fact that Assumptions~\ref{ass:CLF}
and~\ref{ass:bound_prob} are global and stated in terms of a common Lyapunov
function limits the class of plants and control policies considered in our
subsequent analysis.  One case where~(\ref{eq:20}) is {
satisfied} is when $V$ and $f$ are globally Lipschitz continuous, more precisely,
when there exist $\varphi_V,\varphi_f\in\R_{\geq 0}$ such that:
\begin{equation*}
  \begin{split}
    |V(x)-V(z)|&\leq \varphi_V |x-z|\\
    |f(x,u,w)-f(z,u,w)|&\leq \varphi_f |x-z|.
  \end{split}
\end{equation*}
In this case, and since
${f}(\mathbf{0}_n,\mathbf{0}_p,\mathbf{0}_m)=\mathbf{0}_n$, we have
\begin{align*}
  \label{eq:2}
    V({f}(x,\mathbf{0}_p,\mathbf{0}_m))&=| V({f}(x,\mathbf{0}_p,\mathbf{0}_m))
    -V({f}(\mathbf{0}_n,\mathbf{0}_p,\mathbf{0}_m))|\\&\leq
    \varphi_V |{f}(x,\mathbf{0}_p,\mathbf{0}_m)| \\
    &\leq \varphi_V
    \varphi_f|x|\leq \varphi_V
    \varphi_f \varphi^{-1}_1V(x)= \alpha V(x),
\end{align*}
for $\alpha=(\varphi_V\varphi_f)/\varphi_1$ and~(\ref{eq:20}) will hold provided
$p_0<\varphi_1/(\varphi_V\varphi_f)$.  

\begin{ex}
  Consider an open-loop unstable constrained plant model  of the
  form~(\ref{eq:process}) with
\begin{equation*}
  f(x,u,w) =
  \begin{bmatrix}
    x_2+u_1\\-{\rm{sat}} (x_1+x_2) +u_2
  \end{bmatrix}+
\begin{bmatrix}
 \sqrt{w^2+5} -\sqrt{5}\\
  0
\end{bmatrix},
\end{equation*}
with
\begin{equation*}
x=
\begin{bmatrix}
  x_1\\x_2
\end{bmatrix},\;
u=
\begin{bmatrix}
  u_1\\u_2
\end{bmatrix},\quad
  \rm{sat}(\mu)=
  \begin{cases}
    -1,&\text{if $\mu<-1$,}\\
    \nu & \text{if $\mu \in [-1,1]$,}\\
    1,&\text{if $\mu>1$},
  \end{cases}
\end{equation*}
see\cite[Example 2.3]{khalil96} and \cite{quenes12a}.  The second component of the  plant input is
constrained via 
$|u_2(k)|\leq 0.8$, $\forall k\in\N_0$. If we choose
$V(x)=2|x|$  and   policy $\kappa(x)=\big[\begin{matrix}
   - x_2& 0.8 {\rm{sat}} (x_1+x_2)
  \end{matrix}\big]^T$, then direct calculations provide that
\begin{equation*}
 \begin{split}
   V&\big(f(x,\kappa(x),\mathbf{0}_m) \big) =
  0.4 | {\rm{sat}} (x_1+x_2)| \leq  0.4 | x_1+x_2|\\
  &\leq 0.8\max\{|x_1|,|x_2|\}-\max\{|x_1|,|x_2|\}+|x|\leq |x|.
\end{split}
\end{equation*}
Thus, Assumption~\ref{ass:CLF} holds with $\rho = 1/2$, and
$\varphi_1(s)=\varphi_2(s)=2s$. Furthermore, by proceeding as
in\cite[p.73]{khalil96}, it can be shown that~(\ref{eq:20}) 
holds with
$\alpha =1.618$. Thus, provided that~(\ref{eq:66}) holds and $p_0 < 0.618$,
Assumption~\ref{ass:bound_prob} is also satisfied. \hfs
\end{ex}

\par The following example illustrates that, at times,
it may be convenient to first find a Lyapunov function $V$ which
satisfies~(\ref{eq:20}) and then seek a control policy which ensures that
Assumption~\ref{ass:CLF} holds.  
\begin{ex}
\label{ex:2}
  Consider a  scalar unconstrained and unperturbed open-loop unstable  non-linear plant where
  $f(x,u,w)=x^2+u$, with $x,u \in\R$. A 
  stabilizing control 
  policy which 
  satisfies Assumption~\ref{ass:CLF} for $V_1(x)=|x|^2$ is 
  given by $\kappa_1(x)=-x^2+\rho x$, with $\rho\in [0,1)$. However,
  $V_1(x)=|x|^2$ is 
  not suitable for use in Assumption~\ref{ass:bound_prob}, since
  $V_1(f(x,0,0))/V_1(x)   
  =x^2\to \infty$ as $x\to\infty$.
  \par In contrast, if we choose  $V_2\in\mathscr{K}_\infty$ as
  $V_2(x)=\ln(|x|+1)$, for all $x\in\R$, then 
  \begin{multline*}
    V_2(f(x,0,0)) = \ln(x^2+1)\leq  \ln(x^2 + 2|x|+1)\\= 2 \ln(|x|+1) = 2 V_2(x), \quad
    \forall x\in\R
  \end{multline*}
  and~(\ref{eq:20}) holds with  open-loop rate of growth bound constant $\alpha
  =2$. The associated control policy $\kappa_2(x) = 
  -x^2 + \exp(\rho V_2(x)) -1$, where $\rho \in [0,1)$, gives
  \begin{align*}
    V_2(f(x,\kappa_2(x),0)) &= \ln(|\exp(\rho V_2(x)) -1|+1) \\&= \ln(\exp(\rho
    V_2(x))) =\rho V_2(x).
  \end{align*}
  We conclude that if $p_0<1/2$ and the initial plant state is suitably
  distributed, then Assumptions~\ref{ass:CLF} and~\ref{ass:bound_prob} will hold.\hfs
\end{ex}

\subsection{Stochastic Stability with  the Baseline Algorithm}
\label{sec:baseline-algorithm}
 We will next present  sufficient conditions under
which the baseline algorithm~(\ref{eq:4})  achieves stochastic
stability of the closed loop {system}. As in~(\ref{eq:7}), we denote via $p_0$ the probability that the
controller is unable to calculate any control input. 
 Thus, if the baseline
algorithm~(\ref{eq:4}) is used and Assumption~\ref{ass:iid} holds, then (in the
disturbance-free case) the closed loop is characterised by:
\begin{multline}
  \label{eq:10}
  \Prob\big\{x(k+1) = \chi^+ \,\big|\, x(k)=\chi\big\}\\=
  \begin{cases}
   p_0, &\text{if $\chi^+= f(\chi,\mathbf{0}_p,\mathbf{0}_m)$,}\\
   1-p_0, &\text{if $\chi^+=f(\chi,\kappa(\chi),\mathbf{0}_m)$}.
  \end{cases}
\end{multline}

It can be seen that the plant state trajectory is similar to that of a networked control system in which the
controller is unable to communicate with the actuator with  probability $p_{0}$
at any time step. Stability conditions for such systems have been derived both
for linear systems~\cite{gm09,i09} and nonlinear
systems~\cite{quenes12a}. In particular, for a scalar linear
  plant model with a scalar input, $$f(x,u,w)=ax + b_uu + b_w w,\quad 
  (a,b_u,b_w)\in\R^3,$$ and quadratic Lyapunov function,
  $V(x)=x^2$, the condition $p_0|a|^{2}<1$  has been  
  shown to be necessary and sufficient for stabilizability in~\cite{gm09}. Thus, the constant $\alpha$ needs to satisfy $\alpha\in[1,1/p_{0})$ for stability with the baseline algorithm.
{More generally, we have} the following
sufficient condition for stochastic 
stability when the baseline algorithm is used:
\begin{thm}
\label{thm:baseline}
  Suppose that
  Assumptions~\ref{ass:CLF} to~\ref{ass:bound_prob} hold. If 
  \begin{equation}
    \label{eq:baseline_stability}
    p_{0}\alpha+(1-p_{0})\rho < 1,
  \end{equation}
then~(\ref{eq:10}) is stochastically stable.
\end{thm}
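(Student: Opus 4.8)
The plan is to treat the Lyapunov function $V$ from Assumption~\ref{ass:CLF} as a stochastic Lyapunov function for the Markov chain~\eqref{eq:10} and to show that $\E\{V(x(k))\}$ decays geometrically. First I would establish a one-step drift inequality. Since $\{N\}_{\N_0}$ is i.i.d.\ (Assumption~\ref{ass:iid}) and $x(k)$ is a deterministic function of $x(0)$ and $N(0),\dots,N(k-1)$, the event that the controller can evaluate $\kappa(x(k))$ is independent of $x(k)$; hence, conditioning on $x(k)$ and using~\eqref{eq:10}, the open-loop growth bound~\eqref{eq:20} from Assumption~\ref{ass:bound_prob}, and the second line of~\eqref{eq:3},
\begin{equation*}
  \E\{V(x(k+1))\mid x(k)\} \leq p_0\,\alpha\, V(x(k)) + (1-p_0)\,\rho\, V(x(k)) = \beta\, V(x(k)),\qquad \beta\eq p_0\alpha+(1-p_0)\rho,
\end{equation*}
where $\beta<1$ by the hypothesis~\eqref{eq:baseline_stability}.

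Next I would propagate this bound. From~\eqref{eq:66} and the upper bound in~\eqref{eq:3} we have $\E\{V(x(0))\}\leq\E\{\varphi_2(|x(0)|)\}<\infty$, so an easy induction using the tower property on the drift inequality gives that $\E\{V(x(k))\}$ is finite for every $k\in\N_0$ and satisfies $\E\{V(x(k))\}\leq\beta^k\,\E\{V(x(0))\}$. Summing the resulting geometric series and using the lower bound $\varphi_1(|x|)\leq V(x)$ from~\eqref{eq:3} yields
\begin{equation*}
  \sum_{k=0}^{\infty}\E\{\varphi_1(|x(k)|)\}\ \leq\ \sum_{k=0}^{\infty}\E\{V(x(k))\}\ \leq\ \frac{\E\{V(x(0))\}}{1-\beta}\ <\ \infty,
\end{equation*}
which is precisely~\eqref{eq:33} with the choice $\varphi=\varphi_1\in\mathscr{K}_\infty$, establishing stochastic stability.

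I do not expect a deep obstacle: the argument is a direct chaining of the two Lyapunov inequalities already postulated in Assumptions~\ref{ass:CLF} and~\ref{ass:bound_prob}, weighted by the probabilities in~\eqref{eq:10}. The one point that needs care — and the main (if mild) obstacle — is the measure-theoretic bookkeeping in the first step: making precise that the per-step processor availability is independent of the current plant state, so that the law of total expectation legitimately turns the conditional drift bound into $\E\{V(x(k+1))\}\leq\beta\,\E\{V(x(k))\}$, and checking that $\E\{V(x(k))\}<\infty$ at each step before multiplying through. These are handled, respectively, by the i.i.d.\ hypothesis in Assumption~\ref{ass:iid} and by the moment condition~\eqref{eq:66}. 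This same drift-plus-geometric-sum template is what I would expect to reappear, with a suitably augmented (and, in the Markov case, mode-dependent) Lyapunov function, in the analysis of Algorithms~$\textsc{A}_1$ and~$\textsc{A}_2$.
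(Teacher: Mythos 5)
Your proposal is correct and follows essentially the same route as the paper: the identical one-step drift bound $\E\{V(x(k+1))\mid x(k)\}\leq(p_0\alpha+(1-p_0)\rho)V(x(k))$, followed by summation and the bounds $\varphi_1(|x|)\leq V(x)\leq\varphi_2(|x|)$ together with~(\ref{eq:66}). The only cosmetic difference is that the paper invokes a stochastic Lyapunov theorem from Kushner to obtain $\sum_k\E\{V(x(k))\mid x(0)\}\leq cV(x(0))$, whereas you carry out the elementary geometric iteration directly, which is equally valid.
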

\begin{proof}
  First we note that, by~(\ref{eq:10}), the process $\{x\}_{\N_0}$
  is Markovian. Thus, stability can be examined by using a
  stochastic Lyapunov function approach; see, e.g.,\cite{k71}. The  law of total
  expectation, when applied to $\E\{V(x(1))\,|\,x(0)\}$, with $V$ as in~(\ref{eq:3}), gives
  \begin{align}
\nonumber       &\E\big\{V(x(1))\,\big|\,x(0)=\chi\big\}\\\nonumber&=p_0 
    V(f(\chi,\mathbf{0}_p,\mathbf{0}_m))+(1-p_{0})V(f(\chi,\kappa(\chi),\mathbf{0}_m))
      \\
    &\leq
    p_{0}\alpha V(\chi)+(1-p_{0})\rho V(\chi)<V(\chi), \quad \forall \chi \in \R^n,
    \label{eq:38}
    \end{align}
where we have used~(\ref{eq:3}),~(\ref{eq:20}) and~(\ref{eq:baseline_stability}).
  Theorem 2 of \cite[Chapter 8.4.2]{k71} implies that there exists $c<\infty$
  such that $\sum_{k=0}^{\infty} \E\big\{V(x(k))\,\big|\,x(0)=\chi\big\}\leq c V(\chi)$.
  Thus, by using~(\ref{eq:3}) and taking expectation with respect to the distribution of
$x(0)$, we obtain
\begin{multline*}
 \sum_{k=0}^{\infty} \E\big\{\varphi_1(|x(k)|)\big\}
 = \E\bigg\{\sum_{k=0}^{\infty} \E\big\{\varphi_1(|x(k)|)\,\big|\,x(0)\big\}\bigg\}\\\leq
 \E\{cV(x(0))\} \leq c\E\{\varphi_2(|x(0)|)\} <\infty,
\end{multline*}
where the last inequality follows from~(\ref{eq:66}). Since
$\varphi_1\in\mathscr{K}_{\infty}$, stochastic stability
follows.
\end{proof}

For the proposed anytime algorithms, stability analysis is more subtle than for
the baseline algorithm. The  main reason is that, due to buffering, the plant state
$\{x\}_{\N_0}$ will in general not be  Markovian and simple conditioning
as in~(\ref{eq:38}) is not possible.\footnote{Note that some
 of the results included in Section IV of \cite{gupque10b} are incorrect.}

\section{Stability with Algorithm A$_1$}
\label{sec:algorithm-a_1}
To derive sufficient conditions for stochastic stability when Algorithm
$\textsc{A}_{1}$ is used, we will employ a
technique which is roughly based on the approaches used
in~\cite{quenes12a,queost11,quenes11a,kt69,xx09}. As will become apparent, randomness
of the sequence length process
$\{N\}_{\N_0}$, see~(\ref{eq:4b}),  makes
the analysis of the anytime algorithms studied significantly more involved than
the analysis of the predictive networked control formulations
of~\cite{quenes12a,queost11,quenes11a}. 

\subsection{Plant model at times $k\in\mathcal{K}$}
\label{sec:system-model-at}
 For ease of exposition, in the
sequel we assume that $N(0)>0$ and denote  the time steps at which at least one control
input is calculated via $\mathcal{K}=\{k_i\}_{i\in\N_0}$, where $k_{0}=0$ and
\begin{equation}
  \label{eq:21a}
  k_{i+1} = \inf \big\{ k\in\N \colon k>k_i,\;  N(k)>0\big\},\quad i\in\N_0.
\end{equation}

\par It is convenient to introduce the iterated mappings with input $x(k_i)$,
$k_i\in\mathcal{K}$:\footnote{For example, we have 
  ${f}^1(x(k_i))=  {f}(x(k_i),u_{0}(k_i),\mathbf{0}_m)$ and
${f}^2(x(k_i)) = {f}(
{f}(x(k_i),u_{0}(k_i),\mathbf{0}_m),u_{1}(k_i),\mathbf{0}_m)$. Note that, by Step 3 in the algorithm description, the values
  $\{{u}_j(k_i)\}$, $j\in\{0,1,\dots,N(k_i)-1\}$ are determined by $x(k_i)$.}
\begin{equation}
  \label{eq:24}
  {f}^j(x(k_i))\eq
  \begin{cases}
    x(k_i), & \text{if $j=0$},\\
    {f}({f}^{j-1}(x(k_i)),u_{j-1}(k_i),\mathbf{0}_m),&\\\qquad\qquad\qquad\text{if $j\in\{1,\dots,N(k_i)\}$}
  \end{cases}
\end{equation}
and  the related mappings which describe the nominal plant model when the input
is set to zero:
\begin{equation}
  \label{eq:27}
  {f}_{\textsc{ol}}^j(x)\eq
  \begin{cases}
    x, & \text{if $j=0$},\\
    {f}({f}_{\textsc{ol}}^{j-1}(x),\mathbf{0}_p,\mathbf{0}_m),&\text{if $j\geq 1$}.
  \end{cases}
\end{equation}
We also denote the time between two consecutive elements  of $\mathcal{K}$ via
\begin{equation}
  \label{eq:26}
  \Delta_i\eq k_{i+1}-k_i, \quad \forall (k_{i+1}, k_i) \in\mathcal{K} \times\mathcal{K}
\end{equation}
and note that, by Assumption~\ref{ass:iid}, the process
$\{\Delta_{i}\}_{i\in\N_0}$ is i.i.d. with geometric  distribution
  \begin{equation}
    \label{eq:22}
    \Prob \{\Delta_{i}=j\}=(1-p_{0})p_{0}^{j-1},\qquad j\in\N,
  \end{equation}
see \cite{xx09}.
In terms of the quantities introduced above, the state of the
nominal plant~\eqref{eq:15} when Algorithm A$_1$ is used satisfies:
\begin{equation}
  \label{eq:25a}
  x(k_{i}+\ell)=
  \begin{cases}
    f^{\ell}(x(k_i)), &\\\quad\text{if  $\ell \in
      \big\{0,1,\dots,\min (N(k_i),\Delta_i)\big\}$,}\\ 
    {f}_{\textsc{ol}}^{\ell-N(k_i)}\big(f^{N(k_i)}(x(k_i))\big), 
     &\\\quad \text{if  $N(k_i)<\Delta_i$ and
      $\ell \in\{N(k_i)+1,\dots,\Delta_i \}$,}
  \end{cases}
\end{equation}
for all $k_i\in\mathcal{K}$. It is worth emphasizing that~\eqref{eq:25a}
describes the plant state trajectory $\{x(k)\}$ for all $k\in\N_0$.

\par By setting $\ell = \Delta_i$ in~(\ref{eq:25a}), we obtain that the state
in~\eqref{eq:15} when Algorithm A$_1$ is employed can be described \emph{at the
  instants} $k_i\in\mathcal{K}$ via:
\begin{multline}
  \label{eq:25}
  \Prob\big\{x(k_{i+1}) = \chi^+ \,\big|\, x(k_i)=\chi\big\}\\=
  \begin{cases}
   \Prob\{ \Delta_i \leq N(k_i)\}, &\text{if $\chi^+= f^{\Delta_i}(\chi)$,}\\
   1-\Prob\{ \Delta_i \leq N(k_i)\}, &\text{if $\chi^+=
     {f}_{\textsc{ol}}^{\Delta_i-N(k_i)}\big(f^{N(k_i)}(\chi)\big)$}, 
  \end{cases}
\end{multline}
where $\Delta_i\in\N$.

\par It is worth noting that in~(\ref{eq:25}), the number of possible values for
$x(k_{i+1})$ given $x(k_i)$ is countably infinite, whereas if the baseline algorithm is
used,  there are only two possibilities, see~(\ref{eq:10}). The terms $\Prob\{
\Delta_i \leq N(k_i)\}$  can be easily evaluated as per
the following lemma:
\begin{lem}
\label{lem:N_A1}
Suppose that  Assumption~\ref{ass:iid} holds, then
\begin{equation}
  \label{eq:23}
  \Prob\{ \Delta_i \leq N(k_i)\}=\frac{1}{1-p_0}\sum_{l=1}^\Lambda
  p_l (1-p_0^l),\quad \forall k_i\in\mathcal{K}. 
\end{equation}
\end{lem}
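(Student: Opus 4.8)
The plan is to compute the probability by conditioning on the value of $N(k_i)$ and exploiting the independence structure inherited from Assumption~\ref{ass:iid}. First I would record two elementary facts. Since $k_i\in\mathcal{K}$ forces $N(k_i)>0$, the conditional distribution of $N(k_i)$ given this event is, by~(\ref{eq:7}),
\[
\Prob\{N(k_i)=l\mid N(k_i)>0\}=\frac{p_l}{1-p_0},\qquad l\in\{1,\dots,\Lambda\}.
\]
Second, from the geometric law~(\ref{eq:22}),
\[
\Prob\{\Delta_i\leq l\}=\sum_{j=1}^{l}(1-p_0)p_0^{j-1}=1-p_0^l,\qquad l\in\N.
\]

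Next I would argue that $\Delta_i$ and $N(k_i)$ are independent. By the definition~(\ref{eq:21a}), $\Delta_i=\inf\{m>k_i\colon N(m)>0\}-k_i$ is a deterministic function of the tail $\{N(k_i+1),N(k_i+2),\dots\}$, whereas $N(k_i)$ is read off the single index $k_i$; since $\{N\}_{\N_0}$ is i.i.d., these two quantities depend on disjoint blocks of the sequence and are therefore independent. Hence the law of total probability gives
\[
\Prob\{\Delta_i\leq N(k_i)\}=\sum_{l=1}^{\Lambda}\Prob\{\Delta_i\leq l\}\,\Prob\{N(k_i)=l\}
=\sum_{l=1}^{\Lambda}(1-p_0^l)\,\frac{p_l}{1-p_0},
\]
which is exactly~(\ref{eq:23}).

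The only real obstacle is bookkeeping the meaning of ``conditioning on $k_i\in\mathcal{K}$'': one must check that the randomness in the \emph{location} of $k_i$ does not correlate $N(k_i)$ with $\Delta_i$. I would handle this by conditioning on $k_i=k$ for a fixed deterministic $k$, noting that given $k_i=k$ (an event determined by $N(0),\dots,N(k)$) the variable $N(k)$ and the gap $\Delta_i$, determined by $\{N(m)\}_{m>k}$, still depend on disjoint index blocks of the i.i.d.\ process; the identity above then holds conditionally on each $\{k_i=k\}$ and is preserved upon averaging over $k$. Everything else reduces to the two one-line computations displayed above.
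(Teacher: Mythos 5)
Your proof is correct and follows essentially the same route as the paper's: both condition on $N(k_i)=l$ with conditional weight $p_l/(1-p_0)$ (since $k_i\in\mathcal{K}$ forces $N(k_i)\geq 1$), use the geometric law~(\ref{eq:22}) to obtain $\Prob\{\Delta_i\leq l\}=1-p_0^l$, and invoke independence of $\Delta_i$ and $N(k_i)$. If anything, your closing remark justifying that independence by conditioning on the event $\{k_i=k\}$ and noting that $N(k)$ and $\Delta_i$ depend on disjoint index blocks of the i.i.d.\ sequence is slightly more careful than the paper, which simply asserts the independence from the definition~(\ref{eq:26}).
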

\begin{proof}
By~(\ref{eq:26}), the random variables $\Delta_i$ and
$N(k_i)$ are independent. Furthermore, the two processes $\{\Delta_i\}_{i\in\N_0}$ and $\{N\}_{\N_0}$
are i.i.d. Thus, we can condition upon $N(k_i)\geq 1$ to obtain:
\begin{equation*}
  \begin{split}
   & \Prob\{ \Delta_i \leq N(k_i)\} \\&= \Prob\{   \Delta_i \leq N(k) \,|\, k\in\mathcal{K}\}\\
     &=\sum_{l=0}^\Lambda \Prob\{ N(k) = l \,|\, k\in\mathcal{K}\}\\&\qquad\qquad\qquad \cdot
\Prob\{ \Delta_i \leq N(k)\,|\,N(k)=l, k\in\mathcal{K}\}\\
      &=\frac{1}{1-p_0}\sum_{l=1}^\Lambda p_l\cdot \Prob\{ \Delta_i \leq
      l\}
      = \sum_{l=1}^\Lambda p_l
      \sum_{j=1}^{l} p_0^{j-1}.
    \end{split}
\end{equation*}
\vspace{-3mm}
\end{proof}

\subsection{Main Results}
\label{sec:main-results}
As a consequence of~(\ref{eq:25}) and~(\ref{eq:23}), and since $u(k_i)$ is
determined by $x(k_i)$, if Algorithm A$_1$ is used, then the plant
state $\{x(k_i)\}$, with $k_i\in
{\mathcal{K}}$, is Markovian. Stability of the closed loop can  be
analyzed by using a stochastic Lyapunov function approach which, to some extent,
parallels that used to prove Theorem~\ref{thm:baseline}. To state our result, we
first give the following lemma:
\begin{lem}
  \label{lemma:anytime_inter}
  Consider~(\ref{eq:25}) and suppose that Assumptions~\ref{ass:iid}
  and~\ref{ass:bound_prob} hold. Then $ \forall \chi \in\R^n$ and  $\forall k_i,k_{i+1}\in\mathcal{K},$ we have
  \begin{equation}
    \label{eq:6}
    \E\big\{V(x({k_{i+1}}))\,\big|\,x(k_{i})=\chi\big\}
    \leq\Bigg(\sum_{l=1}^{\Lambda}p_{l}\Omega_{l}\Bigg)V(\chi), 
  \end{equation}
  where
  \begin{equation}
    \label{eq:8}
    \begin{split}
      \Omega_{l}&\eq \rho\frac{1-(p_{0}\rho)^{l}}{1-p_{0}\rho}
      +\alpha\frac{(p_{0}\rho)^{l}}{1-p_{0}\alpha}\in \R_{\geq 0},\quad
      l\in\{1,2,\dots,\Lambda\}. 
    \end{split}
\end{equation}
\end{lem}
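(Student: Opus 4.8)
The plan is to use the two-case description of the $\mathcal{K}$-sampled dynamics in \eqref{eq:25a}--\eqref{eq:25} and compute $\E\{V(x(k_{i+1}))\,|\,x(k_i)=\chi\}$ by a \emph{double} conditioning, first on the sequence length $N(k_i)$ and then, inside, on the inter-event time $\Delta_i$. The key observation is that, by Step~3, the tentative controls $\{u_\cdot(k_i)\}$ are deterministic functions of $x(k_i)$; hence, conditioned on $x(k_i)=\chi$, on $N(k_i)=l$, and on $\Delta_i=j$, the next sampled state $x(k_{i+1})$ is a deterministic point which I can bound pathwise. The structure roughly parallels the proof of Theorem~\ref{thm:baseline}, but with a countable family of successor states instead of two.

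For the pathwise bounds I would iterate the two inequalities already at hand. Iterating the contraction \eqref{eq:14} (equivalently the second line of \eqref{eq:3}) along the tentative sequence gives $V(f^{m}(\chi))\le\rho^{m}V(\chi)$ for every $m\in\{0,1,\dots,N(k_i)\}$, while iterating the open-loop growth bound \eqref{eq:20} gives $V(f_{\textsc{ol}}^{m}(y))\le\alpha^{m}V(y)$ for all $m\in\N_0$, $y\in\R^n$. Substituting into \eqref{eq:25a} with $\ell=\Delta_i$, on $\{x(k_i)=\chi,\,N(k_i)=l\}$ one obtains
\[
  V(x(k_{i+1}))\le
  \begin{cases}
    \rho^{\,\Delta_i}\,V(\chi), & \Delta_i\le l,\\
    \alpha^{\,\Delta_i-l}\,\rho^{\,l}\,V(\chi), & \Delta_i> l.
  \end{cases}
\]
Taking $\E\{\,\cdot\,|\,x(k_i)=\chi,\,N(k_i)=l\}$ and using, as in the proof of Lemma~\ref{lem:N_A1}, that $\Delta_i$ is independent of $N(k_i)$ and geometrically distributed as in \eqref{eq:22}, the conditional expectation splits into a finite geometric sum $\sum_{j=1}^{l}(1-p_0)p_0^{\,j-1}\rho^{\,j}$, which evaluates to $(1-p_0)\rho\,\frac{1-(p_0\rho)^{l}}{1-p_0\rho}$, and an infinite one $\rho^{\,l}\sum_{j>l}(1-p_0)p_0^{\,j-1}\alpha^{\,j-l}$, which, after shifting the index, evaluates to $(1-p_0)\,\alpha\,\frac{(p_0\rho)^{l}}{1-p_0\alpha}$. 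Here the crucial point is that this last series converges \emph{because} Assumption~\ref{ass:bound_prob} guarantees $p_0\alpha<1$. Hence $\E\{V(x(k_{i+1}))\,|\,x(k_i)=\chi,\,N(k_i)=l\}\le(1-p_0)\,\Omega_l\,V(\chi)$ with $\Omega_l$ as in \eqref{eq:8}.

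Finally I would average over $N(k_i)$. Since $k_i\in\mathcal{K}$ amounts to $N(k_i)\ge1$, the i.i.d.\ property of $\{N\}_{\N_0}$ gives $\Prob\{N(k_i)=l\,|\,k_i\in\mathcal{K}\}=p_l/(1-p_0)$ for $l\in\{1,\dots,\Lambda\}$; multiplying the previous bound by these weights and summing, the two factors $(1-p_0)$ cancel and \eqref{eq:6} follows. The individual steps are essentially bookkeeping; the only places requiring genuine care are (i) the independence of $\Delta_i$ and $N(k_i)$, which produces the clean product form; (ii) the renormalisation of the law of $N(k_i)$ by $(1-p_0)^{-1}$ upon conditioning on $k_i\in\mathcal{K}$, which is exactly what cancels the $(1-p_0)$'s and leaves an honest $p_l$-weighted combination; and (iii) verifying $p_0\alpha<1$ (not merely $p_0\rho<1$) so that the open-loop tail of the series is summable.
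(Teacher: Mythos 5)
Your proposal is correct and follows essentially the same route as the paper's own proof: the same double conditioning (first on $N(k_i)$ with the renormalized law $p_l/(1-p_0)$, then on $\Delta_i$ via the geometric distribution~(\ref{eq:22})), the same pathwise bounds $\rho^{j}V(\chi)$ for $j\le l$ and $\alpha^{j-l}\rho^{l}V(\chi)$ for $j>l$ obtained by iterating~(\ref{eq:14}) and~(\ref{eq:20}), and the same use of $p_0\alpha<1$ from Assumption~\ref{ass:bound_prob} to sum the tail. Your explicit evaluation of the two geometric sums correctly reproduces $(1-p_0)\Omega_l$, so no gaps.
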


\begin{proof}
We use the total probability formula
  twice. First, we condition on the length of the tentative control sequence
  calculated during $t\in(k_{i}T_s,(k_i+1)T_s)$:
  \begin{equation}
    \label{eq:cond_on_N}
    \begin{split}
      &\E\{V(x({k_{i+1}}))\,|\,x(k_{i})\}\\&
      =\E\big\{\E\big\{V(x({k_{i+1}}))\,\big|\,x(k_{i}),N(k_{i})\big\}\big\}\\ 
      &=\sum_{l=1}^{\Lambda}\E\big\{V(x({k_{i+1}}))\,\big|\,x(k_{i}),N(k_{i})=l\big\}\\&\qquad\qquad\qquad\cdot
      \Prob\{ N(k) = l \,|\,k\in\mathcal{K}\}\\ 
      &=\sum_{l=1}^{\Lambda}\frac{p_{l}}{{1-p_0}}
      \E\big\{V(x({k_{i+1}}))\,\big|\,x(k_{i}),N(k_{i})=l  \big\}.
    \end{split}
  \end{equation}

 We note that in Algorithm $\textsc{A}_{1}$ previously calculated control values are
  erased at the instant $k_{i}$ and, thus,~(\ref{eq:25}) holds. Consequently,
  the conditional expectation
  $\E\{V(x({k_{i+1}}))\,|\,x(k_{i}),N(k_{i})\}$ can
  be evaluated by conditioning further on $\Delta_{i}$:
  \begin{equation}
    \label{eq:23q}
    \begin{split}
    &\E\big\{V(x({k_{i+1}}))\,\big|\,x(k_{i}),N(k_{i})\big\}\\&
    =\E\big\{\E\big\{ V(x({k_{i+1}}))\,\big|\,x(k_{i}),N(k_{i}),\Delta_{i}\big\}\big\}\\
    &=\sum_{j=1}^{\infty}(1-p_{0})p_{0}^{j-1}
    \E\big\{V(x({k_{i+1}}))\,\big|\,x(k_{i}),N(k_{i}),\Delta_{i}=j\big\},  
  \end{split}
\end{equation}
where we have
  used~(\ref{eq:22}).
  Now, using Assumption~\ref{ass:bound_prob} and Equation~(\ref{eq:14}), we
  obtain the bound:
  \begin{displaymath}
  \begin{split}
    &\E\big\{V(x({k_{i+1}}))\,\big|\,x(k_{i})=\chi,N(k_{i})=l,\Delta_{i}=j\big\}
    \\&\qquad\qquad\leq
    \begin{cases}
      \rho^{j}V(\chi),&\text{if $j\leq l$,}\\
      \alpha^{j-l}\rho^{l}V(\chi),&\text{if $j>l$.}
    \end{cases}
    \end{split}
  \end{displaymath}
  Thus,~(\ref{eq:23q}) gives:
  \begin{equation*}
    \begin{split}
     & \E\big\{V(x({k_{i+1}}))\,\big|\,x(k_{i})=\chi,N(k_{i})=l\big\} \\
      &\leq
      (1-p_{0})\left(\sum_{j=1}^{l}p_{0}^{j-1}\rho^{j}
        +\sum_{j=l+1}^{\infty}p_{0}^{j-1}\alpha^{j-l}\rho^{l}\right) V(\chi)\\
      &= (1-p_0)\Omega_{l}V(\chi),
    \end{split}
\end{equation*}
since, by Assumption~\ref{ass:bound_prob}, we have $p_0\alpha<1$.
 Substitution into~(\ref{eq:cond_on_N})
establishes~\eqref{eq:6}.
\end{proof}

Despite the fact that Lemma~\ref{lemma:anytime_inter} considers only the time
instants $k\in\N_0$ where $N(k)>0$, see~\eqref{eq:21a}, the bound in~\eqref{eq:6}
can be used to conclude about stochastic 
stability  (for all
$k\in\N_0$). 

\begin{thm}
  \label{theorem:a1_stability}
  Suppose that
  Assumptions~\ref{ass:CLF}--\ref{ass:bound_prob} hold and define
 \begin{equation}
   \label{eq:86}
   \sigma \eq \frac{1}{1-p_0\rho}\bigg(\rho (1-p_0\alpha) 
     +\frac{\alpha-\rho}{1-p_0}\sum_{l=1}^{\Lambda}p_{l}(p_{0}\rho)^{l}\bigg)
     \in \R_{\geq 0}. 
 \end{equation}
     

If
  \begin{equation}
  \label{eq:85}
  p_0\alpha + (1-p_0)\sigma  <1,
\end{equation}
 then the system~(\ref{eq:25a}) (with
  state trajectory $\{x\}_{\N_0}$) is stochastically
  stable.
\end{thm}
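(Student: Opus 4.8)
The plan is to control the Lyapunov function $V$ first along the embedded Markov chain $\{x(k_i)\}_{k_i\in\mathcal{K}}$ --- where Lemma~\ref{lemma:anytime_inter} already does the heavy lifting --- and then to account for the intermediate time steps $k\notin\mathcal{K}$, at which the closed loop runs open loop. Recall that $\{x(k_i)\}_{k_i\in\mathcal{K}}$ was shown to be Markovian just above Lemma~\ref{lemma:anytime_inter}. First I would iterate the bound~\eqref{eq:6} via the tower property of conditional expectation: a straightforward induction on $i$ (as in the proof of Theorem~\ref{thm:baseline}, using $k_0=0$) gives $\E\{V(x(k_i))\,|\,x(0)=\chi\}\le(\sum_{l=1}^{\Lambda}p_l\Omega_l)^{i}\,V(\chi)$ for all $i\in\N_0$. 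A short algebraic computation --- writing both $\sum_l p_l\Omega_l$ (from~\eqref{eq:8}) and the left-hand side of~\eqref{eq:85} in terms of $\sum_l p_l(p_0\rho)^l$ and using $\sum_l p_l=1-p_0$ --- shows that hypothesis~\eqref{eq:85} forces $\sum_{l=1}^{\Lambda}p_l\Omega_l<1$ (indeed, $\sigma$ in~\eqref{eq:86} is tailored precisely so that~\eqref{eq:85} has this consequence and reduces to the baseline condition~\eqref{eq:baseline_stability} when $\Lambda=1$). Summing the resulting geometric series, $\sum_{i=0}^{\infty}\E\{V(x(k_i))\,|\,x(0)=\chi\}\le c_0\,V(\chi)$ with $c_0=(1-\sum_l p_l\Omega_l)^{-1}<\infty$.

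Next I would transfer this to all of $\N_0$. Since $k_0=0$ and the blocks $\{k_i,\dots,k_{i+1}-1\}$ partition $\N_0$ (almost surely, as $p_0<1$ makes each $\Delta_i$ finite), interchanging the nonnegative sum with the expectation yields $\sum_{k=0}^{\infty}\E\{V(x(k))\}=\sum_{i=0}^{\infty}\E\{\sum_{\ell=0}^{\Delta_i-1}V(x(k_i+\ell))\}$. Fixing a block and conditioning on $x(k_i)=\chi$, on $N(k_i)=l$, and on $\Delta_i=j$, I would use~\eqref{eq:25a}: along the first $\min(l,j)$ steps the precomputed inputs contract $V$ by the factor $\rho$ per step (by~\eqref{eq:14}), while for $j>l$ each of the remaining $j-l$ open-loop steps inflates $V$ by at most $\alpha$ (by~\eqref{eq:20}); hence $V(x(k_i+\ell))\le\rho^{\min(\ell,l)}\alpha^{(\ell-l)_+}V(\chi)$ for $0\le\ell\le\Delta_i$. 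Summing this geometric bound over $\ell$, then averaging over the geometric law~\eqref{eq:22} of $\Delta_i$ (so $\Prob\{\Delta_i>\ell\}=p_0^{\ell}$) and over $\Prob\{N(k_i)=l\,|\,k_i\in\mathcal{K}\}=p_l/(1-p_0)$, produces a bound $\E\{\sum_{\ell=0}^{\Delta_i-1}V(x(k_i+\ell))\,|\,x(k_i)=\chi\}\le c_1\,V(\chi)$ with $c_1=c_1(p_0,\rho,\alpha,\{p_l\})<\infty$; the nested geometric series converge here precisely because $p_0\alpha<1$, i.e.\ by Assumption~\ref{ass:bound_prob}.

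Combining the two estimates, $\sum_{k=0}^{\infty}\E\{V(x(k))\}\le c_1\sum_{i=0}^{\infty}\E\{V(x(k_i))\}\le c_0c_1\,\E\{V(x(0))\}$, and by~\eqref{eq:3} and~\eqref{eq:66} the right-hand side is at most $c_0c_1\,\E\{\varphi_2(|x(0)|)\}<\infty$. Since $\varphi_1(|x(k)|)\le V(x(k))$ with $\varphi_1\in\mathscr{K}_\infty$, it follows that $\sum_{k=0}^{\infty}\E\{\varphi_1(|x(k)|)\}<\infty$, i.e.\ the definition~\eqref{eq:33} of stochastic stability holds with $\varphi=\varphi_1$.

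The main obstacle is the passage from the event times $\mathcal{K}$ to the full trajectory. Lemma~\ref{lemma:anytime_inter} only controls $V$ at the random, sparse instants $k_i$, whereas in between the loop is genuinely unstable, so one must show that the \emph{accumulated} Lyapunov cost over an inter-event interval is, in expectation, a bounded multiple of $V(x(k_i))$. This works only because $\Delta_i$ is geometric with parameter $p_0$ and $p_0\alpha<1$, so the geometric tail dominates the $\alpha$-fold open-loop growth. The residual difficulty is the fairly tedious bookkeeping of constants --- handling the two cases $\ell\le N(k_i)$ and $\ell>N(k_i)$, summing the double geometric series, and verifying that the condition one ends up needing is exactly~\eqref{eq:85} with $\sigma$ as in~\eqref{eq:86}.
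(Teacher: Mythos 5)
Your proposal is correct and follows essentially the same route as the paper: bound $V$ along the embedded Markov chain $\{x(k_i)\}_{k_i\in\mathcal{K}}$ using Lemma~\ref{lemma:anytime_inter} (verifying that~\eqref{eq:85} is equivalent to $\Omega=\sum_l p_l\Omega_l<1$), then bound the accumulated Lyapunov cost over each inter-event block by a constant multiple of $V(x(k_i))$ using the geometric law of $\Delta_i$ and $p_0\alpha<1$, and sum. The only cosmetic difference is that you use the sharper per-step bound $\rho^{\min(\ell,l)}\alpha^{(\ell-l)_+}$ where the paper settles for $\alpha^{\ell}$ (since $\rho<1<\alpha$), which changes the constant $c_1$ but not the argument.
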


\begin{proof}
By Lemma~\ref{lemma:anytime_inter} and since
$\{x\}_{\mathcal{K}}$ is Markovian, we have that if 
\begin{equation}
    \label{eq:62}
    \Omega\triangleq\sum_{l=1}^{\Lambda}p_{l}\Omega_{l}<1,
  \end{equation}
  where $\Omega_{l}$ are  defined in~(\ref{eq:8}), then
 \begin{equation*}
    \E\big\{V(x({k_{i+1}}))\,\big|\,x(k_{i}),x(k_{i-1}),\dots, x(k_0)\big\}
    \leq\Omega V(x(k_{i})).
  \end{equation*}
Since, by Assumption~\ref{ass:CLF}, $V\colon \R^n\to\R_{\geq 0}$, we conclude that $V$ is a
  stochastic Lyapunov function for~(\ref{eq:25}); c.f., \cite{k71,meyn89}.
 
\par Direct calculations yield that 
\begin{equation}
  \label{eq:80}
  \begin{split}
    \Omega&=\sum_{l=1}^{\Lambda}p_{l}\Omega_{l}%
  %
=
      \frac{\rho}{1-p_0\rho}\sum_{l=1}^{\Lambda}p_{l} 
      + \sum_{l=1}^{\Lambda}p_{l}\frac{(p_{0}\rho)^{l} (\alpha- \rho) 
      }{(1-p_{0}\rho)(1-p_{0}\alpha)}\\
      &=
      \frac{\rho(1-p_0)}{1-p_0\rho}
      + \frac{ (\alpha- \rho) 
      }{(1-p_{0}\rho)(1-p_{0}\alpha)}\sum_{l=1}^{\Lambda}p_{l}(p_{0}\rho)^{l}.
  \end{split}
\end{equation}
Using equations~\eqref{eq:80} and~\eqref{eq:86}, we obtain
\begin{equation*}
  \begin{split}
    \Omega &=
      \frac{\rho(1-p_0)}{1-p_0\rho}
      + \frac{ (1-p_0)\big(\sigma (1-p_0\rho) - \rho (1-p_0\alpha) \big)
      }{(1-p_{0}\rho)(1-p_{0}\alpha)}\\
      &=\frac{(1-p_0)\big(  \sigma - p_0\rho\sigma
        \big)}{(1-p_{0}\rho)(1-p_{0}\alpha)}  
      =\frac{(1-p_0)  \sigma }{(1-p_{0}\alpha)} .
  \end{split} 
\end{equation*}
Hence,~\eqref{eq:85} is equivalent to~\eqref{eq:62}.
As a consequence, if~\eqref{eq:85} holds, then~\cite[Chapter
  8.4.2, Theorem 2]{k71} implies exponential stability at the instants
  $k_{i}\in\mathcal{K}$, i.e., we have:
  \begin{equation}
    \label{eq:analysis_markov}
    \E\{V(x({k_{i}}))\,|\,x(k_{0})=\chi_0\}\leq\Omega^{i}V(\chi_0),
    \quad \forall
    i\in\N, \quad \forall \chi_0\in\R^n
  \end{equation}
  Now for the time instants $k\in\N \,\backslash\, \mathcal{K}$, i.e., at those
  time steps when 
  no control input is calculated, we proceed as  in
  the proof of Lemma~\ref{lemma:anytime_inter}, to obtain that
  \begin{equation*}
    \begin{split}
    &\E\left\{\sum_{k=k_{i}}^{k_{i+1}-1}V(x({k}))\,\bigg|\,x(k_{i})\right\}\\
    &=\sum_{l=1}^{\Lambda} 
    \frac{ p_{l}}{1-p_0}
    \E\left\{\sum_{k=k_{i}}^{k_{i+1}-1}V(x({k}))\,\bigg|\,x(k_{i}),N(k_i)=l\right\}\\
    & =\sum_{l=1}^{\Lambda}p_{l}\sum_{j=1}^{\infty}p_{0}^{j-1}
    \E\left\{\sum_{k=k_{i}}^{k_{i+1}-1}V(x({k}))\,\bigg|\,
      x(k_{i}),N(k_i)=l,\Delta_{i}=j\right\}.  
  \end{split}
\end{equation*}
Since $\rho<1<\alpha$, we can bound
  \begin{multline*}
    \E\Bigg\{\!\sum_{k=k_{i}}^{k_{i+1}-1}\!\!V(x({k}))\,\bigg|\,
    x(k_{i}),N(k_{i})=l,\Delta_{i}=j\Bigg\}
   \\\leq \sum_{k=0}^{j-1}\alpha^{k}\E\big\{V(x(k_{i}))\,|\,x(k_{i})=\chi\big\}
   =\frac{\alpha^{j}-1}{\alpha-1} V(\chi) 
 \end{multline*}
so that
  \begin{multline*}
     \E\left\{\sum_{k=k_{i}}^{k_{i+1}-1}V(x({k}))\,\bigg|\,x(k_{i})=\chi,N(k_{i})\right\}\\\leq
    \frac{(1-p_{0})}{\alpha-1}\sum_{j=1}^{\infty}(\alpha^{j}-1)p_{0}^{j-1}V(\chi)
    =\frac{1}{1-p_{0}\alpha}V(\chi), 
  \end{multline*}
  in turn yielding
  \begin{multline}
    \label{eq:63}
    \E\left\{\sum_{k=k_{i}}^{k_{i+1}-1}V(x({k}))\,\bigg|\, x(k_{i})=\chi\right\}
    \\\leq \sum_{l=1}^{\Lambda}\frac{p_{l}}{(1-p_0)(1-p_{0}\alpha)}V(\chi)
    =\beta V(\chi),
  \end{multline}
where $ \beta\eq {1}/({1-p_{0}\alpha})\in\R_{\geq 0}$.
The expectation on the left hand side of~(\ref{eq:63}) is taken with respect to
  the distributions of $N(k_i)$ and $\Delta_i$. Since $\{x\}_{\K}$
  is Markovian and $N(k_i)$ and $\Delta_i$ are independent, we can take
  conditional expectation $\E\{\, \cdot\,|\, x(k_0)\}$ on both sides
  of~(\ref{eq:63}) to obtain:
\begin{align*}
&\E\Bigg\{ \E\Bigg\{\sum_{k=k_{i}}^{k_{i+1}-1}V(x({k}))\,\bigg|\, x(k_{i})\Bigg\}
    \,\Bigg|\, x(k_0)=\chi_0\Bigg\}\\&\qquad\leq\beta
  \,\E\big\{V(x({k_{i}}))\,|\,x(k_{0})=\chi_0\big\}\\
  \Rightarrow \: &\E\Bigg\{
    \E\bigg\{\sum_{k=k_{i}}^{k_{i+1}-1}V(x({k}))\,\bigg|\,
      x(k_{i}),x(k_{i-1}),\dots,x(k_0)\bigg\}\,\\&\qquad\qquad\qquad\qquad \bigg|\, x(k_0)=\chi_0\Bigg\} \leq \beta\, \Omega^i V(\chi_0)\\
   \Rightarrow \; &     \E\left\{\sum_{k=k_{i}}^{k_{i+1}-1}V(x({k}))\,\bigg|\, x(k_0)=\chi_0\right\}\\&\qquad\qquad\leq \beta
  \,\E\big\{V(x({k_{i}}))\,|\,x(k_{0})=\chi_0\big\} \leq \beta\, \Omega^i V(\chi_0),
    \end{align*}
where we have used the bound
in~(\ref{eq:analysis_markov}). Since we assume that $k_0=0$, 
this gives
  \begin{displaymath}
  \begin{split}
&    \E\left\{\sum_{k={0}}^{k_{j+1}-1}V(x({k}))\,\bigg|\,x({0})=\chi_0\right\}
    \leq 
    \beta
    \sum_{i=0}^{j}\Omega^{i}V(\chi_0) \\&\qquad\qquad= \beta \frac{1-\Omega^{j+1}}{1-\Omega}V(\chi_0)
    \leq \frac{\beta}{1-\Omega} V(\chi_0).
    \end{split}
  \end{displaymath}
   Thus, by letting 
  $k_{j+1}\to\infty$, it follows that there exists
  $c<\infty$ such that $$\sum_{k=0}^{\infty} \E\big\{V(k)\,\big|\,x(0)=\chi_0\big\}\leq c
  V(\chi_0).$$ The remainder of the proof now follows as in the proof of
  Theorem~\ref{thm:baseline}. 
\end{proof}

Theorem~\ref{theorem:a1_stability} establishes sufficient conditions for
stochastic stability of the closed loop when processor availability is i.i.d.\
and Algorithm A$_1$ is used. The quantity introduced in~(\ref{eq:86}) involves
the 
distribution of $\{N\}_{\N_0}$, the contraction factor of the baseline controller
$\kappa$,
see~(\ref{eq:3}), and the bound on 
the rate of increase of the plant state when left in open loop,
see~(\ref{eq:20}).

\par As a particular case, suppose that the distribution of $\{N\}_{\N_0}$
satisfies $p_1=1-p_0$, i.e., the processor time availability is such that the
Algorithm A$_1$  provides at most one control input.  In this case, expression~\eqref{eq:86} gives that
$\sigma=\rho$ 
  and, not surprisingly, we recover the sufficient condition for stochastic
  stability established for the baseline
algorithm~\eqref{eq:4} in~\eqref{eq:baseline_stability}. 

\par More generally, if the probability that Algorithm A$_1$
provides more than one control 
  value is non-zero, then 
 Theorem~\ref{theorem:a1_stability} establishes stochastic
stability for  a larger class of plant models  than
Theorem~\ref{thm:baseline}. This observation follows upon noting that
$\sigma$ can be rewritten as: 
\begin{equation*}
  \sigma = \rho -\frac{(\alpha-\rho)
  }{(1-p_0)(1-p_0\rho)}\sum_{l=1}^{\Lambda}p_{l} \big( \rho p_0-(\rho p_{0})^{l}\big).
\end{equation*}
Thus, if $p_{l^\star}>0$ for some  $l^\star\in\{2,3,\dots,\Lambda\}$, then
$\sum_{l=1}^{\Lambda}p_{l} \big( \rho p_0-(\rho 
p_{0})^{l}\big)>0$ and $\sigma <\rho$. 
This  suggests that Algorithm A$_1$
has better stabilizing properties than the baseline algorithm. 
 


\section{Stability with Algorithm A$_2$}
\label{sec:algorithm-a_2}
We first note that for $\Lambda \in \{1,2\}$, Algorithm A$_2$ is equivalent to
Algorithm A$_1$. Henceforth, we focus on cases where $\Lambda > 2$. 
It follows directly from~(\ref{eq:18}) and~(\ref{eq:9}) that with Algorithm
A$_2$ if
$\Delta_i>N(k_i)$ and $\lambda(k_i-1)>N(k_i)+1$, then  
$\lambda(k_i)=\lambda(k_i-1)-1>N(k_i)$ and the
plant input at  times 
$\{k_i+N(k_i),k_i  +N(k_i)+ 1, \dots , k_i+\min (\lambda(k_i),\Delta_i) - 1\}$
will stem from buffer contents at
 time $k_i-1$, see also Example~\ref{ex:one}. Thus, with Algorithm
$\textsc{A}_{2}$, 
$\{x\}_{\mathcal{K}}$ and $\{x\}_{\N_0}$ are not Markovian and the analysis
carried out for
Algorithm $\textsc{A}_{1}$ does not carry over directly. 

\par To recover a Markovian structure, consider the overall system
state $\{\theta\}_{\N_0}$ defined via: 
\begin{equation}
\label{eq:58b}
  \theta(k) \eq
  \begin{bmatrix}
    x(k)\\b(k-1)
  \end{bmatrix}.
\end{equation}
In terms of $\theta(k)$,~\eqref{eq:18} gives that at all times where
$N(k)=0$, the plant input is given by  
\begin{equation}
  \label{eq:28}
  u(k) = 
  \begin{bmatrix}
    0_{p\times (n+p)} & I_p & 0_{p \times (\Lambda-2)p}
  \end{bmatrix}
  \theta(k),\quad \Lambda>2.
\end{equation}
Furthermore, $\{\theta\}_{\N_0}$ and thereby also
$\{\theta\}_{\mathcal{K}}$ are Markovian processes.
The mapping\footnote{For example, for $j=1$ we have 
$ {f}_{\textsc{b}}^1(\theta(k))=
\begin{bmatrix} f(x(k),b_2(k-1),\mathbf{0}_m)\\
  S b(k-1)
\end{bmatrix}
$, see also~\eqref{eq:12}.}
\begin{equation}
  \label{eq:29}
  \begin{split}
&  {f}_{\textsc{b}}^j(\theta)\\&\eq
  \begin{cases}
   \theta, & \text{if $j=0$},\\
   \begin{bmatrix}
      {f}\Big(
M_{1} {f}_{\textsc{b}}^{j-1}(\theta),
     M_{2}
{f}_{\textsc{b}}^{j-1}(\theta),\mathbf{0}_m \Big)\\
S^j  \Big[\begin{matrix} 0_{\Lambda p \times n}&I_{\Lambda p}
\end{matrix}\Big] \theta
\end{bmatrix}
,&\text{if $j \geq 1$}
  \end{cases}
  \end{split}
\end{equation}
where
\begin{align*}
M_{1}&=    \Big[\begin{matrix}
      I_n&0_{n\times \Lambda p}
    \end{matrix} \Big]\\
    M_{2}&=\Big[\begin{matrix}
    0_{p\times (n+p)} & I_p & 0_{p \times (\Lambda-2)p}
  \end{matrix} \Big],
  \end{align*}
 allows one  to characterize the nominal system behaviour at times where
computational resources are insufficient to calculate control inputs, so that buffered
plant inputs are used. More precisely, the  nominal plant state when
Algorithm A$_2$ is used can be stated in terms of a random mapping
with inputs $\{\theta\}_{\mathcal{K}}$ as follows:
\begin{equation}
  \label{eq:30}
\begin{split} & x(k_{i}+\ell)\\&=
  \begin{cases}
    f^{\ell}(x(k_i)), &\text{if  $\ell \in
      \big\{0,1,\dots,\min (N(k_i),\Delta_i)\big\}$}\\
    M_{1}
    {f}_{\textsc{b}}^{\ell-N(k_i)}(\theta'), &\text{if
       $\Delta_i>N(k_i)$ and $\lambda(k_i)>N(k_i)$}\\
    & \text{and
      $\ell \in\{N(k_i)+1,\dots,$}\\&\qquad\qquad\text{$\min (\lambda(k_i),\Delta_i) \}$}\\
    {f}_{\textsc{ol}}^{\ell-\lambda(k_i)}(x'), 
    &\text{if  $\Delta_i>\lambda(k_i)$ }\\&\text{and
      $\ell \in\{\lambda(k_i)+1,\dots,\Delta_i \}$,}
  \end{cases}
  \end{split}
\end{equation}
 where  $f^{\ell}$ and
${f}_{\textsc{ol}}^{j}$ are defined in~\eqref{eq:24} and~\eqref{eq:27},
respectively,  $\lambda(k_i)= \max\{N(k_i),
\lambda(k_i-1)-1\}$, and with
\begin{equation}
  \label{eq:31}
  \begin{split}
    \theta'&\eq
    \begin{bmatrix}
      f^{N(k_i)}(x(k_i))\\ S^{N(k_i)} b(k_i-1)
    \end{bmatrix}
    \\
    x'&\eq  \begin{bmatrix}
      I_n&0_{n\times \Lambda p}
    \end{bmatrix}
    f_{\textsc{b}}^{\lambda(k_i)- N(k_i)}(\theta').
  \end{split}
\end{equation}
  At the
  instants $k_i\in\mathcal{K}$, the nominal plant state
in~\eqref{eq:15} when Algorithm A$_2$ is used can thus be described  via: 
\begin{equation}
  \label{eq:32}
  \begin{split}
    &\Prob\big\{x(k_{i+1}) = \chi^+ \,\big|\,
    x(k_i)=\chi,b(k_i-1)=\upsilon\big\}\\
    &=
    \begin{cases}
      \Prob\{ \Delta_i \leq N(k_i)\}, &\text{if $\chi^+= f^{\Delta_i}(\chi)$,}\\
      \Prob\{N(k_i)< \Delta_i \leq \lambda (k_i)\}, &\text{if $\chi^+=
       M_{1}
        {f}_{\textsc{b}}^{\Delta_i-N(k_i)}(\vartheta)$}, \\
      \Prob\{\Delta_i > \lambda (k_i)\}, &\text{if $\chi^+=
        {f}_{\textsc{ol}}^{\Delta_i-N(k_i)}(\xi)$},
    \end{cases}
  \end{split}
\end{equation}
where
\begin{equation}
  \label{eq:35}
    \vartheta=
    \begin{bmatrix}
      f^{N(k_i)}(\chi)\\ S^{N(k_i)} \upsilon
    \end{bmatrix}
    ,\quad
    \xi=  \begin{bmatrix}
      I_n&0_{n\times \Lambda p}
    \end{bmatrix}
    f_{\textsc{b}}^{\lambda(k_i)- N(k_i)}(\vartheta).
 \end{equation}
 Note that, as shown in Lemma~\ref{lem:N_A1}, the probabilities  $\Prob\{
\Delta_i \leq N(k_i)\}$ used in~\eqref{eq:32}  are i.i.d. Nevertheless, it is easy to see that
  \begin{equation*}
    \label{eq:48}
    \begin{split}
     & \Prob\{N(k_i)< \Delta_i  \leq \lambda (k_i)\}\\&
      = \frac{1}{1-p_0}\sum_{l=1}^\Lambda p_l\cdot \Prob \big\{l< \Delta_i  \leq 
      \max\{l, \lambda(k_i-1)-1\}\big\}\\
      &= \sum_{l=1}^\Lambda p_l\!\! \sum_{j=l+1}^{\max\{l, \lambda(k_i-1)-1\}}p_0^{j-1}\\
&      =\frac{1}{1-p_0}\sum_{l=1}^\Lambda p_l \big(p_0^l-p_0^{\max\{l,
        \lambda(k_i-1)-1\}}\big),
    \end{split}
  \end{equation*}
expression which depends upon $\lambda(k_i-1)$ and therefore on $b(k_i-1)$.


\par  The following stochastic stability result is akin to the one developed in
Section~\ref{sec:main-results} for Algorithm A$_1$. It shows that the sufficient
condition developed for Algorithm A$_1$ is also sufficient to guarantee
stochastic stability when Algorithm A$_2$ is used.

\begin{thm}
  \label{theorem:a2_stability}
   Suppose that Assumptions~\ref{ass:CLF} to \ref{ass:bound_prob} 
   hold and that Algorithm $\textsc{A}_{2}$ is  used.
   If~\eqref{eq:85} is satisfied, then
 the closed loop system (with
  state trajectory $\{x\}_{\N_0}$) is stochastically
  stable. \hfs
\end{thm}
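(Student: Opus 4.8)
The plan is to mirror the proof of Theorem~\ref{theorem:a1_stability}, but carried out for the augmented state $\{\theta\}_{\N_0}$ of~\eqref{eq:58b}, which --- unlike $\{x\}_{\N_0}$ --- is Markovian both on $\N_0$ and on $\mathcal{K}$. For $\Lambda\in\{1,2\}$ Algorithm $\textsc{A}_2$ coincides with Algorithm $\textsc{A}_1$, so the claim follows from Theorem~\ref{theorem:a1_stability}; hence I focus on $\Lambda>2$. The core step is to establish, for the Markov process $\{\theta\}_{\mathcal{K}}$, the one-event drift bound
\begin{equation*}
  \E\big\{V(x(k_{i+1}))\,\big|\,\theta(k_i)=(\chi,\upsilon)\big\}\leq\Omega\, V(\chi),\qquad\forall(\chi,\upsilon)\in\R^n\times\R^{\Lambda p},
\end{equation*}
with the \emph{same} constant $\Omega=\sum_{l=1}^{\Lambda}p_l\Omega_l$ as in~\eqref{eq:62}, together with the inter-event bound $\E\{\sum_{k=k_i}^{k_{i+1}-1}V(x(k))\,|\,\theta(k_i)\}\leq\beta V(x(k_i))$, $\beta=1/(1-p_0\alpha)$, exactly as in~\eqref{eq:63}; the remainder of the argument is then verbatim as in the proofs of Theorems~\ref{thm:baseline} and~\ref{theorem:a1_stability}.

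To obtain the drift bound I would start from the transition law~\eqref{eq:32} and condition first on $N(k_i)$ and then on $\Delta_i$, as in~\eqref{eq:cond_on_N}--\eqref{eq:23q}. The decisive point --- and what distinguishes Algorithm $\textsc{A}_2$ from $\textsc{A}_1$ --- is that a buffer element applied between $k_i$ and $k_{i+1}$ may have been computed at an \emph{earlier} event instant, so one must check that the state at which it is applied is still the predicted state it was designed for. This can be arranged by taking the tentative inputs of Step~3 to be $u_j(k)=\kappa(\chi)$ (a choice permitted by Assumption~\ref{ass:CLF}): with this choice the predicted trajectory recomputed at each event continues the one already held in the buffer, so the stored buffer always encodes a valid predicted trajectory emanating from the current state. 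Consequently $f_{\textsc{b}}^{j}$ in~\eqref{eq:29} inherits the contraction $V\big(M_1 f_{\textsc{b}}^{j}(\vartheta)\big)\leq\rho^{N(k_i)+j}V(\chi)$ for $0\leq j\leq\lambda(k_i)-N(k_i)$; that is, the first $\lambda(k_i)$ steps after $k_i$ all contract $V$ by $\rho$, and only from step $\lambda(k_i)+1$ onwards does the open-loop growth bound~\eqref{eq:20} enter.

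Feeding this into~\eqref{eq:32} gives, for $\Delta_i=j$,
\begin{equation*}
  \E\big\{V(x(k_{i+1}))\,\big|\,\theta(k_i)=(\chi,\upsilon),\Delta_i=j\big\}\leq
  \begin{cases}
    \rho^{j}V(\chi), & j\leq\lambda(k_i),\\
    \alpha^{j-\lambda(k_i)}\rho^{\lambda(k_i)}V(\chi), & j>\lambda(k_i),
  \end{cases}
\end{equation*}
which is \emph{precisely} the bound used for Algorithm $\textsc{A}_1$ in the proof of Lemma~\ref{lemma:anytime_inter}, except that $N(k_i)$ has been replaced by the effective buffer length $\lambda(k_i)=\max\{N(k_i),\lambda(k_i-1)-1\}\geq N(k_i)$; compare~\eqref{eq:9} with~\eqref{eq:19}. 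Since the right-hand side above is non-increasing in $\lambda(k_i)$ and $\lambda(k_i)\geq N(k_i)$ for every realisation, averaging over the geometric variable $\Delta_i$ (see~\eqref{eq:22}) and then over $N(k_i)$ --- conditioned on $N(k_i)\geq 1$ --- yields $\E\{V(x(k_{i+1}))\,|\,\theta(k_i)\}\leq\Omega V(x(k_i))$. The same crude per-step estimate $V(x(k_i+m))\leq\alpha^{m}V(x(k_i))$ (valid because $\rho<1<\alpha$) gives the inter-event bound with the same $\beta$ as in~\eqref{eq:63}.

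Finally, as established in the proof of Theorem~\ref{theorem:a1_stability}, inequality~\eqref{eq:85} is algebraically equivalent to $\Omega<1$. Hence, since $\{\theta\}_{\mathcal{K}}$ is Markovian and $\theta\mapsto V(x)$ is nonnegative, \cite[Chapter~8.4.2, Theorem~2]{k71} gives $\E\{V(x(k_i))\,|\,\theta(0)\}\leq\Omega^{i}V(x(0))$, and combining this with the inter-event bound yields $\sum_{k=0}^{\infty}\E\{V(x(k))\,|\,\theta(0)\}\leq\beta(1-\Omega)^{-1}V(x(0))<\infty$; taking expectation over $x(0)$ via~\eqref{eq:3} and~\eqref{eq:66} and using $\varphi_1\in\mathscr{K}_\infty$ completes the argument. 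The main obstacle is the step highlighted above: verifying that the buffered inputs of Algorithm $\textsc{A}_2$ retain the one-step contraction when applied at later times --- equivalently, that the buffer always carries a valid predicted trajectory from the current state --- since this is exactly what allows the reduction to the Algorithm $\textsc{A}_1$ drift constant $\Omega$ and, thereby, to condition~\eqref{eq:85}.
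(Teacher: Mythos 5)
Your proposal follows essentially the same route as the paper's proof: pass to the Markovian augmented state $\theta$, condition first on $N(k_i)$ and then on $\Delta_i$, bound the conditional drift by the Algorithm $\textsc{A}_1$ expression $\Omega$ using $\lambda(k_i)\geq N(k_i)$ together with $\rho<1<\alpha$, and conclude with the same inter-event bound $\beta=1/(1-p_0\alpha)$ and \cite[Chapter 8.4.2, Theorem 2]{k71}. Your explicit verification that buffered inputs retain the one-step contraction when applied at later time steps --- by fixing the tentative controls in Step~3 to $\kappa$ evaluated at the predicted state, so that the buffer always encodes a valid predicted trajectory from the current state --- addresses a point the paper leaves implicit (it simply invokes~\eqref{eq:14} and~\eqref{eq:20}, relying on the selection in Step~3 being a deterministic function of the predicted state), and is a worthwhile clarification since without such determinism a stored input need not contract $V$ at the realised state.
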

\begin{proof}
It follows from~\eqref{eq:32},~\eqref{eq:29} and by proceeding as in
the proof of Lemma~\ref{lemma:anytime_inter} that
\begin{equation}
  \label{eq:36}
  \begin{split}
 &   \E\big\{V(x({k_{i+1}}))\,\big|\, \theta(k_i)\big\}
   \\& =
    \sum_{l=1}^{\Lambda}p_{l}
    \sum_{j=1}^{\infty}p_{0}^{j-1}\E\big\{V(x({k_{i+1}}))\,\big|\, 
    \theta(k_i), N(k_i)=l,\Delta_i=j\}.
  \end{split}
\end{equation}
On the other hand, since $\lambda(k_i)$ is a function of $N(k_i)$ and
$b(k_i-1)$, we have
\begin{equation*}
  \begin{split}
&    \E\big\{V(x({k_{i+1}}))\,\big|\, x(k_i)=\chi,b(k_i-1)=\upsilon,
    N(k_i)=l,\Delta_i=j\} \\ 
    &=   \E\big\{V(x({k_{i+1}}))\,\big|\, \\&\qquad\quad x(k_i)=\chi,b(k_i-1)=\upsilon,
    N(k_i)=l,\Delta_i=j,\lambda(k_i)=\lambda\}\\
&\leq \begin{cases}
      \rho^{j}V(\chi),&\text{if $j\leq \lambda$,}\\
      \alpha^{j-\lambda}\rho^{\lambda}V(\chi),&\text{if $j>\lambda$,}
    \end{cases}
  \end{split}
\end{equation*}
where we have used the bounds in~\eqref{eq:14},~\eqref{eq:20} and where
$\lambda= \max\{l,\lambda_0-1\}$ with $\lambda_0$ denoting the 
index of the last nonzero entry in $\upsilon$, see~\eqref{eq:9}. Substitution
into~\eqref{eq:36} yields that
\begin{equation*}
  \begin{split}
    & \E\big\{V(x({k_{i+1}}))\,\big|\, x(k_i)=\chi, b(k_i-1)=\upsilon \big\}
     \\&\leq \sum_{l=1}^{\Lambda}p_{l}
    \Bigg(\sum_{j=1}^{\lambda}p_{0}^{j-1}\rho^j +\sum_{j=\lambda+1}^\infty
    p_{0}^{j-1}\alpha^{j-\lambda}\rho^\lambda \Bigg) V(\chi)\\
    &\leq   \sum_{l=1}^{\Lambda}p_{l}
    \bigg(\sum_{j=1}^{l}p_{0}^{j-1}\rho^j +\sum_{j=l+1}^\infty
    p_{0}^{j-1}\alpha^{j-l}\rho^l \bigg) V(\chi)\\&=\Omega  V(\chi),\quad \forall
    (\chi,\upsilon)\in \R^n \times \R^{\Lambda p}
  \end{split}
\end{equation*}
where $ \Omega$ is defined in~\eqref{eq:62} and where we have used the fact that 
$\rho<1<\alpha$ and $\lambda\geq l$.
\par  Since $\{\theta\}_{\mathcal{K}}$ is  Markovian, it 
follows from\cite[Chapter  8.4.2, Theorem 2]{k71}  that 
\begin{multline*}
   \E\big\{V(x({k_{i}}))\,\big|\,x(k_{0})=\chi_0,b(k_0-1)=\upsilon_0 \big\}
 \\  \leq \Omega^{i}V(\chi_0),
   \quad 
   \forall (i,\chi_0,\upsilon_0)\in\N \times\R^n \times \R^{\Lambda p}.
 \end{multline*}
The remainder of the proof now follows, \emph{mutatis mutandis}, that of  Theorem~\ref{theorem:a1_stability}.
\end{proof}

\section{Markovian Processor State Model}
\label{sec:mark-chain-proc}
So far we have assumed that the process $\{N\}_{\N_0}$ is i.i.d. In  situations
where the control loop is shared with other applications having time-varying and correlated
processing demands it is likely that Assumption~\ref{ass:iid} will not be satisfied. We 
will next outline how the analysis presented can be extended to 
encompass cases where the processor availability for control, henceforth modeled via
the processor state process
$\{g\}_{\N_0}$,  is correlated.
\begin{ass}
  \label{ass:ge}
  The processor state process $\{g\}_{\N_0}$ is an  irreducible aperiodic finite Markov Chain (see,
  e.g.,\cite{lawler06}) with values in the finite set $\{1,2,\dots,G\}$,
  $G\in\N$. Its transition 
  matrix $Q$ is given by
  \begin{equation}
    \label{eq:49}
    Q=
    \begin{bmatrix}
      q_{11}&q_{12}&\dots&q_{1G}\\
       q_{21}&q_{22}& & q_{2G}\\
       \vdots & \vdots &\ddots&\vdots \\
       q_{G1} & q_{G2}&\dots &q_{GG}
    \end{bmatrix},
  \end{equation}
where $
  q_{ij} = \Prob\{ g(k+1) = j\,|\, g(k) = i\}$, $\forall i,j\in \{1,2,\dots,G\}.$
Given any processor state $g(k)=\varsigma$, $\forall
  (l,\varsigma)\in\{0,1,\dots,\Lambda\} \times \{1,2,\dots,G\},$ the conditional distribution of the
process $\{N\}_{\N_0}$ satisfies
\begin{equation}
  \label{eq:40}
  \Prob\{N(k)=l\,|\, g(k)=\varsigma\} = p_{l|\varsigma},
\end{equation}
with given probabilities $p_{l|\varsigma}$.\hfs
\end{ass}
For the baseline algorithm in~\eqref{eq:4} stochastic stability can be ensured
as follows:
\begin{thm}
  Suppose that Assumptions~\ref{ass:CLF},~\ref{ass:bound_prob} and~\ref{ass:ge}
  hold. Define
  \begin{equation}
    \label{eq:75}
    \hat{p}_{0}=\max_{\varsigma\in\{1,2,\dots,G\}}{p_{0|\varsigma}}.
  \end{equation}
A sufficient condition for~\eqref{eq:10} to be stochastically stable is that
  \begin{equation}
    \label{eq:73}
    \hat{p}_0\alpha +(1-\hat{p}_0)\rho<1.
  \end{equation}
\end{thm}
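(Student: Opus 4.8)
The plan is to mimic the proof of Theorem~\ref{thm:baseline}, the only new ingredient being that the plant state alone is no longer Markovian, so I would augment it with the processor state. First I would observe that, when the baseline algorithm~\eqref{eq:4} is used, the joint process $\{(x(k),g(k))\}_{\N_0}$ is Markovian: given $(x(k),g(k))=(\chi,\varsigma)$, the next processor state $g(k+1)$ is drawn from row $\varsigma$ of $Q$, the event ``sufficient resources are available'' has probability $1-p_{0|\varsigma}$ by~\eqref{eq:40}, and $x(k+1)$ is a deterministic function of $\chi$ together with that event. Hence a stochastic Lyapunov approach in the variable $(x,g)$ is available.

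The core computation is the one-step conditional expectation of $V$ (as in~\eqref{eq:3}). By the law of total expectation and~\eqref{eq:40},
\begin{align*}
&\E\{V(x(k+1))\,|\,x(k)=\chi,g(k)=\varsigma\}\\
&\quad= p_{0|\varsigma}\,V(f(\chi,\mathbf{0}_p,\mathbf{0}_m)) + (1-p_{0|\varsigma})\,V(f(\chi,\kappa(\chi),\mathbf{0}_m))
\;\leq\; \big(p_{0|\varsigma}\alpha + (1-p_{0|\varsigma})\rho\big)V(\chi),
\end{align*}
where the inequality uses~\eqref{eq:3} and~\eqref{eq:20}. The key step is then to remove the dependence on $\varsigma$: since $\alpha\geq 1>\rho$, the affine map $p\mapsto \rho+p(\alpha-\rho)=p\alpha+(1-p)\rho$ is nondecreasing, so for every $\varsigma\in\{1,\dots,G\}$ we have $p_{0|\varsigma}\alpha+(1-p_{0|\varsigma})\rho\leq \hat p_0\alpha+(1-\hat p_0)\rho=:\bar\rho$, and $\bar\rho<1$ by~\eqref{eq:73}. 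Therefore $\E\{V(x(k+1))\,|\,x(k),g(k)\}\leq \bar\rho\,V(x(k))$, a bound \emph{uniform} in the processor state, so $V$ is a stochastic Lyapunov function for $\{(x(k),g(k))\}_{\N_0}$.

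From here I would conclude as in Theorem~\ref{thm:baseline}: because the one-step contraction is uniform in $\varsigma$, the tower property yields $\E\{V(x(k+1))\}\leq\bar\rho\,\E\{V(x(k))\}$, hence $\E\{V(x(k))\}\leq\bar\rho^{k}\E\{V(x(0))\}\leq\bar\rho^{k}\E\{\varphi_2(|x(0)|)\}$, which is finite by~\eqref{eq:66}. Summing the geometric series and using $\varphi_1(|x|)\leq V(x)$ with $\varphi_1\in\mathscr{K}_\infty$ gives $\sum_{k=0}^{\infty}\E\{\varphi_1(|x(k)|)\}\leq \E\{\varphi_2(|x(0)|)\}/(1-\bar\rho)<\infty$, i.e.\ stochastic stability; alternatively one may invoke~\cite[Chapter 8.4.2, Theorem 2]{k71} for the augmented chain with Lyapunov function $(x,g)\mapsto V(x)$. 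I do not expect a genuine obstacle: the only point that needs care is the monotonicity argument showing that the \emph{worst-case} probability $\hat p_0$ produces the dominating contraction factor $\bar\rho$, and I would note in passing that irreducibility and aperiodicity of $\{g\}_{\N_0}$ in Assumption~\ref{ass:ge} are not actually needed for this (coarse) sufficient condition — they will matter for the sharper Markov-modulated analysis of the anytime algorithms.
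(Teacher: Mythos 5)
Your proposal is correct and follows essentially the same route as the paper: augment the plant state with the processor state to recover the Markov property, bound the one-step conditional expectation of $V$ by $\big(p_{0|\varsigma}\alpha+(1-p_{0|\varsigma})\rho\big)V(\chi)$, use $\alpha-\rho>0$ to dominate this uniformly by $\hat p_0\alpha+(1-\hat p_0)\rho<1$, and conclude as in Theorem~\ref{thm:baseline}. The only (immaterial) difference is that you finish by summing the geometric series directly rather than citing \cite[Chapter 8.4.2, Theorem 2]{k71}, and your observation that irreducibility and aperiodicity of $\{g\}_{\N_0}$ are not needed for this coarse bound is accurate.
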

\begin{proof}
  First, we note that the joint process $\{(x,g)\}_{\N_0}$ is
  Markovian. Thus, by using the law of total expectation and the fact that $\alpha-\rho>0$, we obtain
  \begin{equation}
    \label{eq:74}
    \begin{split}
     &\E\big\{V(x(1))\,\big|\,x(0)=\chi,g(0)=\varsigma \big\}\\\leq&\quad
     (p_{0|\varsigma}\alpha +(1-p_{0|\varsigma})\rho)V(\chi)\\\leq&\quad (
     \hat{p}_0\alpha +(1-\hat{p}_0)\rho)V(\chi),
     \end{split}
   \end{equation}
 for all $(\chi,\varsigma) \in \R^n \times \{1,2,\dots,G\}$. The remainder of
 the proof now follows as in the proof of Theorem~\ref{thm:baseline} and is
 omitted for space constraints. 
\end{proof}

The stability results of
Sections~\ref{sec:algorithm-a_1} and~\ref{sec:algorithm-a_2} can be extended to
encompass the Markovian processor model of Assumption~\ref{ass:ge}. Here we only present the stability results for Algorithm A$_1$. The main difference from the analysis in Section~\ref{sec:algorithm-a_1} is the fact that the
plant state $\{x\}_{\mathcal{K}}$ is no longer
Markovian. Interestingly, the analysis can be extended by recognizing that the aggregated process $\{(x,g)\}_{\mathcal{K}}$ is 
Markovian. 

\par Whilst the process $\{\Delta_{i}\}_{i\in\mathbb{N}_{0}}$ is no longer
i.i.d.,  the conditional distributions $\Prob\{\Delta_i  \,|\, 
g(k_i)\}$ can be evaluated as per the following result:
\begin{lem}
  \label{lem:delta_g}
 Suppose that Assumption~\ref{ass:ge} holds and define
 \begin{equation*}
   \G \eq \big\{\varsigma \in\{1,2,\dots, G\}\colon p_{0|\varsigma}<1\big\}.
 \end{equation*}
 Then\footnote{Note that in the
   i.i.d.\ case of Assumption~\ref{ass:iid}, we have $G=Q=1$, $\G=\{1\}$, $\bar Q=p_0$ and $\bar
p = 1-p_0$, so that~\eqref{eq:43} reduces to~\eqref{eq:22}.}
 \begin{equation}
   \label{eq:43}
   \Prob\{\Delta_i = j \,|\, g(k_i)=\varsigma\} = 
   \bar q_\varsigma
   \bar Q^{j-1} \bar p,\quad \forall (j,\varsigma) \in \N\times\G,
\end{equation}
where
\begin{equation}
  \label{eq:54}
  \begin{split}
    \bar q_\varsigma&\eq \begin{bmatrix}
     q_{\varsigma 1}  &\dots& q_{\varsigma G}\end{bmatrix} ,\\      \bar Q &\eq \diag (p_{0|1},\dots,p_{0|G}) Q,\\
    \bar p &\eq
    \begin{bmatrix}
      1-p_{0|1}&
      \dots&
      1-p_{0|G}
    \end{bmatrix}^T.
  \end{split}
\end{equation}

\end{lem}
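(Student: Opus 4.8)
The plan is to track the processor state over the block of time steps between two consecutive elements of $\mathcal{K}$ and to turn the event $\{\Delta_i=j\}$ into a statement about the trajectory $g(k_i), g(k_i+1), \dots, g(k_i+j)$. Recall that $k_{i+1} = \inf\{k>k_i \colon N(k)>0\}$, so $\{\Delta_i=j\}$ is precisely the event that $N(k_i+1)=0,\dots,N(k_i+j-1)=0$ and $N(k_i+j)>0$. I would first condition on the intermediate processor states: writing $g(k_i)=\varsigma$ and summing over the possible values $\varsigma_1,\dots,\varsigma_{j-1}\in\{1,\dots,G\}$ of $g(k_i+1),\dots,g(k_i+j-1)$, the joint law factors because $\{(N(k),g(k))\}$ inherits a Markov structure from Assumption~\ref{ass:ge}: given $g(k+1)=\varsigma'$, the variable $N(k+1)$ is conditionally independent of the past with distribution $p_{\cdot|\varsigma'}$. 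Hence each intermediate step contributes a transition probability $q_{\varsigma_m \varsigma_{m+1}}$ together with the factor $\Prob\{N(k_i+m)=0 \mid g(k_i+m)=\varsigma_m\}=p_{0|\varsigma_m}$, and the final step $k_i+j$ contributes $\Prob\{N>0 \mid g(k_i+j)=\varsigma_j\}=1-p_{0|\varsigma_j}$ summed over $\varsigma_j$.

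Collecting these factors, the sum over the intermediate states is exactly a matrix product: the weight on the transition from $\varsigma_m$ to $\varsigma_{m+1}$ combined with the ``no control computed'' factor $p_{0|\varsigma_m}$ is the $(\varsigma_m,\varsigma_{m+1})$ entry of $\diag(p_{0|1},\dots,p_{0|G})\,Q = \bar Q$. The very first step out of state $\varsigma$, however, should not carry a $p_{0|\varsigma}$ factor since the conditioning fixes $g(k_i)=\varsigma$ with $\varsigma\in\G$ (i.e.\ we are already given that a control \emph{was} computed at $k_i$, so $N(k_i)>0$ and the constraint there is automatically satisfied); that first step is just the row $\bar q_\varsigma = [q_{\varsigma 1}\ \cdots\ q_{\varsigma G}]$. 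Then $j-2$ further intermediate steps each supply a factor of $\bar Q$, and the terminal step supplies the column vector $\bar p = [1-p_{0|1}\ \cdots\ 1-p_{0|G}]^T$. Chaining these gives $\bar q_\varsigma\, \bar Q^{\,j-2}\,(\diag(p_{0|\cdot})Q)\,\bar p$ — I need to be slightly careful bookkeeping the exponent, but the net count is that there are $j-1$ ``stay silent'' transitions ($k_i\!\to\!k_i+1,\dots,k_i+j-1\!\to\!k_i+j$ minus appropriate adjustments), landing on $\bar q_\varsigma \bar Q^{\,j-1}\bar p$ as claimed in~\eqref{eq:43}. The case $j=1$ is the boundary check: the formula should reduce to $\bar q_\varsigma \bar p = \sum_{\varsigma'} q_{\varsigma\varsigma'}(1-p_{0|\varsigma'}) = \Prob\{N(k_i+1)>0 \mid g(k_i)=\varsigma\}$, which is indeed $\Prob\{\Delta_i=1\mid g(k_i)=\varsigma\}$.

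The main obstacle, and the step I would be most careful with, is getting the exponent of $\bar Q$ exactly right — i.e.\ correctly allocating which time steps contribute a $p_{0|\cdot}$ factor versus a plain transition versus the terminal $(1-p_{0|\cdot})$ factor. The cleanest way to avoid an off-by-one is to write out explicitly $\Prob\{\Delta_i=j, g(k_i+1)=\varsigma_1,\dots,g(k_i+j)=\varsigma_j \mid g(k_i)=\varsigma\}$ as a product of $j$ one-step conditional probabilities, each of the form $\Prob\{g(k_i+m+1)=\varsigma_{m+1}\mid g(k_i+m)=\varsigma_m\}\cdot \Prob\{N(k_i+m+1)=(\text{0 or }{>}0)\mid g(k_i+m+1)=\varsigma_{m+1}\}$, then recognize each of the first $j-1$ of these (the ``$N=0$'' ones) as an entry of $\bar Q$ read in the orientation $\bar q_\varsigma \bar Q \bar Q \cdots$, and the last as $\bar p$. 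A secondary point worth a sentence is why restricting $\varsigma\in\G$ is the right domain: if $p_{0|\varsigma}=1$ the state $\varsigma$ can never produce $N>0$, so conditioning on $g(k_i)=\varsigma$ for such $\varsigma$ is vacuous (it has probability zero as an element of $\mathcal{K}$), and irreducibility in Assumption~\ref{ass:ge} guarantees $\G\neq\emptyset$; the footnote's remark that the i.i.d.\ case $G=1$ recovers~\eqref{eq:22} is then an immediate sanity check of the final formula.
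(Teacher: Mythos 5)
Your proposal is correct and follows essentially the same route as the paper's proof: condition on the processor-state trajectory $g(k_i+1),\dots,g(k_i+j)$, factor the event $\{\Delta_i=j\}$ into the per-step probabilities $p_{0|\varsigma_m}$ for the $j-1$ silent steps and $1-p_{0|\varsigma_j}$ for the terminal step via the Markov property, and recognize the resulting sum as the matrix product $\bar q_\varsigma \bar Q^{j-1}\bar p$. The exponent bookkeeping you flag is resolved exactly as you suggest — by writing the summand as $(q_{\varsigma\varsigma_1}p_{0|\varsigma_1})(q_{\varsigma_1\varsigma_2}p_{0|\varsigma_2})\cdots(q_{\varsigma_{j-1}\varsigma_j}(1-p_{0|\varsigma_j}))$, which exhibits the $j-1$ factors of $\bar Q$ directly — and your $j=1$ sanity check confirms the count.
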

\begin{proof}
  Denote   
  \begin{equation*}
\begin{split}
      \vec{g}&=
      \begin{bmatrix}
        g(k_i+1) & g(k_i+2) &\dots& g(k_i+j)
      \end{bmatrix},\\
      \vec{\varsigma}&=
      \begin{bmatrix}
        \varsigma_1 & \varsigma_2 &\dots& \varsigma_j
      \end{bmatrix}
      \end{split}
  \end{equation*}
and $\mathcal{G}=\G^j$.
Conditioning upon the processor state sequence $\vec{g}\in\mathcal{G}$ gives that  
 \begin{equation*}
   \begin{split}
    &\Prob\{\Delta_i = j \,|\, g(k_i)=\varsigma\}\\&= \sum_{\vec{\varsigma}\in\mathcal{G}}
    \Prob\big\{\Delta_i = j \,|\, g(k_i)=\varsigma,\vec{g}=\vec{\varsigma}\big\}
    \,\Prob\{\vec{g}=\vec{\varsigma}\,|\, g(k_i)=\varsigma\}\\
    &=\sum_{\vec{\varsigma}\in\mathcal{G}}
    p_{0|\varsigma_1}p_{0|\varsigma_2}
    \dots p_{0|\varsigma_{j-1}} (1-p_{0|\varsigma_{j}}) q_{\varsigma \varsigma_1}
 q_{\varsigma_1 \varsigma_2}
 \dots  q_{\varsigma_{j-1} \varsigma_j}\\
 &= \sum_{\vec{\varsigma}\in\mathcal{G}}
( q_{\varsigma \varsigma_1}p_{0|\varsigma_1})
( q_{\varsigma_1 \varsigma_2}p_{0|\varsigma_2}) \dots  
 ( q_{\varsigma_{j-2} \varsigma_{j-1}} p_{0|\varsigma_{j-1}})\\&\qquad\qquad\qquad\qquad\qquad\qquad\qquad\qquad
( q_{\varsigma_{j-1} \varsigma_j} (1-p_{0|\varsigma_{j}})),
   \end{split}
 \end{equation*}
which can be rewritten in compact form as in~\eqref{eq:43}.
\end{proof}

\par The state evolution at times $k_{i}\in\mathcal{K}$ can now be evaluated as 
\begin{equation}
  \label{eq:55}
  \begin{split}
&  \Prob\big\{x(k_{i+1}) = \chi^+ \,\big|\, x(k_i)=\chi,g(k_i)=\varsigma \big\}\\&=
  \begin{cases}
   \Prob\{ \Delta_i \leq N(k_i)\,|\,g(k_i)=\varsigma\},\\
   \qquad\qquad \text{if $\chi^+=
     f^{\Delta_i}(\chi)$,}\\ 
   1-\Prob\{ \Delta_i \leq N(k_i)\,|\,g(k_i)=\varsigma\}, \\
   \qquad\qquad \text{if $\chi^+=
     {f}_{\textsc{ol}}^{\Delta_i-N(k_i)}\big(f^{N(k_i)}(\chi)\big)$}, 
  \end{cases}
  \end{split}
\end{equation}
where $\Delta_i\in\N$ and $\varsigma \in \G$. 
Lemma~\ref{lemma:anytime_inter} can be generalized as follows:
\begin{lem}
\label{lem:A1Markov}
  Consider~\eqref{eq:55} and suppose that Assumption~\ref{ass:bound_prob}
  with $p_0$ replaced by $\hat{p}_{0}$, 
  and Assumption~\ref{ass:ge} hold. Then
  \begin{equation*}
    \E\{V(x({k_{i+1}}))\,|\,x(k_{i})=\chi,g(k_i)=\varsigma\}\leq
    \Upsilon_\varsigma V(\chi)
  \end{equation*}
for all $(\chi,\varsigma) \in \R^n\times \G$, 
where
\begin{equation}
\label{eq:37}
  \Upsilon_\varsigma\eq \bar q_\varsigma\big(I-\rho\bar{Q}\big)^{-1} \bigg(
 \rho I + \frac{  (\alpha-\rho)}{1-p_{0|\varsigma}}\big(I-\alpha\bar{Q}\big)^{-1}\sum_{l=1}^\Lambda
   p_{l|\varsigma}(\rho\bar{Q})^l\bigg) \bar p,
\end{equation}
with $\bar q_\varsigma$, $\bar{Q}$, and $\bar p$ as in~(\ref{eq:54}).
\end{lem}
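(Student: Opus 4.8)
The plan is to mirror the proof of Lemma~\ref{lemma:anytime_inter}, replacing the scalar geometric sums used there by matrix Neumann series built from $\bar Q$. As a preliminary step I would record that, since $Q$ is row-stochastic and $\diag(p_{0|1},\dots,p_{0|G})$ has entries at most $\hat p_0$, the nonnegative matrix $\bar Q$ has every row sum bounded by $\hat p_0<1/\alpha\le 1$; consequently both $\rho\bar Q$ and $\alpha\bar Q$ have spectral radius strictly below $1$, so the series $(I-\rho\bar Q)^{-1}=\sum_{m\ge 0}(\rho\bar Q)^m$ and $(I-\alpha\bar Q)^{-1}=\sum_{m\ge 0}(\alpha\bar Q)^m$ converge. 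This is exactly where the hypothesis ``Assumption~\ref{ass:bound_prob} with $p_0$ replaced by $\hat p_0$'' is used. Note also that every matrix appearing below is a polynomial or power series in $\bar Q$, so all these matrices commute.

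Next I would apply the law of total expectation twice, as in~\eqref{eq:cond_on_N}. First condition on $N(k_i)$, using $\Prob\{N(k_i)=l\mid k_i\in\mathcal{K},\,g(k_i)=\varsigma\}=p_{l|\varsigma}/(1-p_{0|\varsigma})$ for $l\in\{1,\dots,\Lambda\}$. Because Algorithm~A$_1$ erases the buffer at $k_i$ and the first $N(k_i)$ tentative inputs are functions of $x(k_i)$ alone, \eqref{eq:55} holds, and one may condition further on $\Delta_i$; given $g(k_i)=\varsigma$ the variables $N(k_i)$ and $\Delta_i$ are conditionally independent, and Lemma~\ref{lem:delta_g} supplies $\Prob\{\Delta_i=j\mid g(k_i)=\varsigma\}=\bar q_\varsigma\bar Q^{j-1}\bar p$. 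The innermost bound is identical to the i.i.d.\ case: combining~\eqref{eq:14} along the $N(k_i)$ closed-loop steps with~\eqref{eq:20} along the remaining open-loop steps gives
\[
\E\{V(x(k_{i+1}))\mid x(k_i)=\chi,\,N(k_i)=l,\,\Delta_i=j\}\le
\begin{cases}\rho^j V(\chi),&\text{if }j\le l,\\ \alpha^{j-l}\rho^l V(\chi),&\text{if }j>l.\end{cases}
\]

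Substituting and splitting the sum over $j$ at $j=l$, the two pieces become $\rho\,\bar q_\varsigma\big(\sum_{m=0}^{l-1}(\rho\bar Q)^m\big)\bar p=\rho\,\bar q_\varsigma\big(I-(\rho\bar Q)^l\big)(I-\rho\bar Q)^{-1}\bar p$ and $\alpha\,\bar q_\varsigma(\rho\bar Q)^l(I-\alpha\bar Q)^{-1}\bar p$. Collecting the $(\rho\bar Q)^l$ terms and invoking the matrix partial-fraction identity
\[
-\rho(I-\rho\bar Q)^{-1}+\alpha(I-\alpha\bar Q)^{-1}=(\alpha-\rho)(I-\rho\bar Q)^{-1}(I-\alpha\bar Q)^{-1},
\]
which I would verify by clearing denominators, reduces the bound on $\E\{V(x(k_{i+1}))\mid x(k_i)=\chi,N(k_i)=l,g(k_i)=\varsigma\}$ to $\bar q_\varsigma\big[\rho(I-\rho\bar Q)^{-1}+(\alpha-\rho)(\rho\bar Q)^l(I-\rho\bar Q)^{-1}(I-\alpha\bar Q)^{-1}\big]\bar p\,V(\chi)$. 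Averaging this over $l$ with weights $p_{l|\varsigma}/(1-p_{0|\varsigma})$ and using $\sum_{l=1}^\Lambda p_{l|\varsigma}=1-p_{0|\varsigma}$ — so that the ``$\rho I$'' term survives the averaging intact — collapses everything to $\Upsilon_\varsigma V(\chi)$ with $\Upsilon_\varsigma$ as in~\eqref{eq:37}.

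I expect the main obstacle to be the matrix bookkeeping rather than anything conceptual: establishing the spectral-radius estimate that legitimizes the Neumann series, and carefully tracking the (commuting but nontrivially interleaved) matrix products when collapsing the double sum so that the result lines up with the stated form of $\Upsilon_\varsigma$. The probabilistic skeleton — double conditioning on $N(k_i)$ and on $\Delta_i$, whose conditional law is now given by Lemma~\ref{lem:delta_g}, together with the observation that $\{(x,g)\}_{\mathcal{K}}$ is Markovian — is a direct transcription of the proof of Lemma~\ref{lemma:anytime_inter}.
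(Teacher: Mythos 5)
Your proposal is correct and follows essentially the same route as the paper's proof: double conditioning on $N(k_i)$ and then on $\Delta_i$ via Lemma~\ref{lem:delta_g}, the same pointwise bound $\rho^jV(\chi)$ versus $\alpha^{j-l}\rho^lV(\chi)$, summation of the resulting matrix geometric series, and the same partial-fraction identity $\alpha(I-\alpha\bar{Q})^{-1}-\rho(I-\rho\bar{Q})^{-1}=(\alpha-\rho)(I-\rho\bar{Q})^{-1}(I-\alpha\bar{Q})^{-1}$ followed by averaging over $l$ with $\sum_{l=1}^{\Lambda}p_{l|\varsigma}=1-p_{0|\varsigma}$. Your explicit row-sum/spectral-radius argument for the convergence of the Neumann series is simply a more detailed version of the paper's one-line remark that $\rho\hat{p}_0\leq\alpha\hat{p}_0<1$ guarantees convergence.
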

\begin{proof}
  Following as in the proof of Lemma~\ref{lemma:anytime_inter}, we first
  condition upon $N(k_i)$ to calculate, for $\varsigma \in\G$,
  \begin{multline}
    \label{eq:59}
      \E\{V(x({k_{i+1}}))\,|\,x(k_{i}),g(k_i)=\varsigma\}\\= \sum_{l=1}^\Lambda
      \frac{p_{l|\varsigma}}{1-p_{0|\varsigma}}
      \E\big\{V(x({k_{i+1}}))\,\big|\,x(k_{i}),N(k_{i})=l,g(k_i)=\varsigma  \big\} 
   \end{multline}
and then condition further on $\Delta_i$ to obtain that 
\begin{equation*}
  \begin{split}
    &\E\big\{V(x({k_{i+1}}))\,\big|\,x(k_{i}),N(k_{i}),g(k_i)=\varsigma \big\}=\\ 
    &
    \sum_{j=1}^{\infty} \bar q_\varsigma \bar Q^{j-1} \bar p
    \E\big\{V(x({k_{i+1}}))\,\big|\,x(k_{i}),N(k_{i}),
    \Delta_{i}=j,g(k_i)=\varsigma \big\},
  \end{split}
\end{equation*}
where we have used~\eqref{eq:43}. Equation~\eqref{eq:14} and
Assumption~\ref{ass:bound_prob} then provide the bound:
\begin{equation}
  \label{eq:61}
  \begin{split}
   & \E\big\{V(x({k_{i+1}}))\,\big|\,x(k_{i})=\chi,N(k_{i}),g(k_i)=\varsigma  \big\}\\
    &\leq
    \sum_{j=1}^{l} \bar q_\varsigma
   \bar Q^{j-1} \bar p
   \rho^j V(\chi)
   + \sum_{j=l+1}^{\infty} \bar q_\varsigma
   \bar Q^{j-1} \bar p
   \alpha^{j-l}\rho^l V(\chi)\\
   &= \bar q_\varsigma \bigg(  \sum_{j=1}^{l} 
   \bar Q^{j-1} 
   \rho^j    + \rho^l \sum_{j=l+1}^{\infty}
   \bar Q^{j-1} 
   \alpha^{j-l}\bigg) \bar pV(\chi),
  \end{split}
\end{equation}
Since  $Q$ is the transition probability of an
  irreducible aperiodic Markov Chain and $\rho \hat{p}_{0}\leq\alpha \hat{p}_{0}<1$, the above
  summation is convergent. If we now substitute~(\ref{eq:61})
  into~\eqref{eq:59}, then we obtain the bound
\begin{equation*}
  \begin{split}
   &\E\{V(x({k_{i+1}}))\,|\,x(k_{i})=\chi,g(k_i)=\varsigma\}\\&\leq
    \sum_{l=1}^\Lambda \frac{p_{l|\varsigma} \bar
      q_\varsigma}{1-p_{0|\varsigma}} \Bigg( \sum_{j=1}^{l} \bar Q^{j-1} \rho^j
    + \rho^l \sum_{j=l+1}^{\infty} \bar Q^{j-1} \alpha^{j-l}\Bigg) \bar p
    V(\chi)\\
  &=\frac{ \bar q_\varsigma}{1-p_{0|\varsigma}} \sum_{l=1}^\Lambda p_{l|\varsigma}
    \Big( \rho \big(I-(\rho\bar{Q})^l\big)\big(I-\rho\bar{Q}\big)^{-1} 
    \\&\qquad\qquad+\alpha (\rho\bar{Q})^l \big(I-\alpha\bar{Q}\big)^{-1} \Big) \bar p
    V(\chi)\\
    &=\frac{ \bar q_\varsigma}{1-p_{0|\varsigma}} \sum_{l=1}^\Lambda
    p_{l|\varsigma}\Big( \rho\big(I-\rho\bar{Q}\big)^{-1}\\&\qquad\qquad+ (\rho\bar{Q})^l
    \big(\alpha\big(I-\alpha\bar{Q}\big)^{-1} 
    -\rho\big(I-\rho\bar{Q}\big)^{-1}\big)\Big)\bar p
    V(\chi)
  \end{split}
\end{equation*}
where we have used\cite[Prop.\ 9.4.13]{bernst09}.
The result now follows upon noting that 
$\alpha\big(I-\alpha\bar{Q}\big)^{-1} 
    -\rho\big(I-\rho\bar{Q}\big)^{-1} 
    =\big(I-\rho\bar{Q}\big)^{-1} \big(\alpha\big(I-\rho\bar{Q}\big)
    -\rho \big(I-\alpha\bar{Q}\big)\big)\big(I-\alpha\bar{Q}\big)^{-1}
    =(\alpha-\rho)\big(I-\rho\bar{Q}\big)^{-1} \big(I-\alpha\bar{Q}\big)^{-1}$
and some algebraic manipulations.
\end{proof}
Following as in the proof of Theorem~\ref{theorem:a1_stability}, one can derive the following stochastic stability result:
\begin{thm}
  \label{thm:A1stabMarkov}
 Suppose that
  Assumption~\ref{ass:CLF} and the hypotheses of Lemma~\ref{lem:A1Markov}
  hold. If
  \begin{equation*}
    \Upsilon_{\varsigma} <1,\quad \forall \varsigma \in\G,
  \end{equation*}
   then the system~\eqref{eq:30} (with
  state trajectory $\{x\}_{\N_0}$) is stochastically
  stable.
\end{thm}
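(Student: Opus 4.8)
The plan is to transcribe the proof of Theorem~\ref{theorem:a1_stability}, replacing the scalar contraction factor $\Omega$ by a bound that is uniform over the finite set of processor states $\G$. First I would note that, by the definition of $\mathcal{K}$ in~\eqref{eq:21a}, each $k_i$ satisfies $N(k_i)>0$, which forces $g(k_i)\in\G$ almost surely (at times where $p_{0|\varsigma}=1$ no control is ever computed), and that the aggregated process $\{(x,g)\}_{\mathcal{K}}$ is Markovian --- this is the reason the processor state must be carried along, since $\{x\}_{\mathcal{K}}$ on its own is not. Setting $\hat{\Upsilon}\eq\max_{\varsigma\in\G}\Upsilon_{\varsigma}$, which is finite and, by hypothesis, strictly less than one, Lemma~\ref{lem:A1Markov} together with the tower property gives
\[
\E\big\{V(x(k_{i+1}))\,\big|\,x(k_i),g(k_i),\dots,x(k_0),g(k_0)\big\}\leq \Upsilon_{g(k_i)}V(x(k_i))\leq \hat{\Upsilon}\,V(x(k_i)),
\]
so $V$ is a stochastic Lyapunov function for $\{(x,g)\}_{\mathcal{K}}$. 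Invoking~\cite[Chapter 8.4.2, Theorem 2]{k71} then yields the geometric decay $\E\{V(x(k_i))\,|\,x(k_0)=\chi_0,g(k_0)=\varsigma_0\}\leq\hat{\Upsilon}^{\,i}V(\chi_0)$ for all $i\in\N$ and all admissible initial conditions.

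Next I would control the inter-sample instants $k\in\N_0\setminus\mathcal{K}$. Exactly as in the derivation of~\eqref{eq:63}, for $m\in\{0,1,\dots,\Delta_i-1\}$ the state $x(k_i+m)$ is produced either by the contractive iterates in~\eqref{eq:24} or by the open-loop iterates in~\eqref{eq:27}, and in both cases $V(x(k_i+m))\leq\alpha^{m}V(x(k_i))$ because $\rho<1\leq\alpha$; summing over $m$ gives $\sum_{m=k_i}^{k_{i+1}-1}V(x(m))\leq\frac{\alpha^{\Delta_i}-1}{\alpha-1}V(x(k_i))$. Taking the conditional expectation over $\Delta_i$ given $g(k_i)=\varsigma$ via Lemma~\ref{lem:delta_g}, and using $\sum_{j\geq1}\bar q_\varsigma\bar Q^{j-1}\bar p=1$ and $\sum_{j\geq1}\alpha^{j}\bar q_\varsigma\bar Q^{j-1}\bar p=\alpha\,\bar q_\varsigma(I-\alpha\bar Q)^{-1}\bar p$, I obtain
\[
\E\Big\{\textstyle\sum_{k=k_i}^{k_{i+1}-1}V(x(k))\,\Big|\,x(k_i)=\chi,g(k_i)=\varsigma\Big\}\leq\beta_\varsigma V(\chi),\qquad\beta_\varsigma\eq\frac{\alpha\,\bar q_\varsigma(I-\alpha\bar Q)^{-1}\bar p-1}{\alpha-1},
\]
and set $\hat{\beta}\eq\max_{\varsigma\in\G}\beta_\varsigma<\infty$. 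Convergence of the matrix series is the one analytic point worth spelling out: since $Q$ is row-stochastic, $\bar Q\mathbf{1}=(p_{0|1},\dots,p_{0|G})^{T}$ is entrywise $\leq\hat{p}_0\mathbf{1}$, so $\rho(\bar Q)\leq\|\bar Q\|_\infty=\hat{p}_0$ and hence $\rho(\alpha\bar Q)\leq\alpha\hat{p}_0<1$ --- which is precisely Assumption~\ref{ass:bound_prob} with $p_0$ replaced by $\hat{p}_0$ --- so the Neumann series $(I-\alpha\bar Q)^{-1}=\sum_{j\geq0}(\alpha\bar Q)^{j}$ (and likewise for $\bar Q$ itself) converges.

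Finally I would assemble the two estimates. Conditioning successively with the Markov property of $\{(x,g)\}_{\mathcal{K}}$ and combining the last display with the geometric decay from the first step gives $\E\{\sum_{k=k_i}^{k_{i+1}-1}V(x(k))\,|\,x(0)=\chi_0,g(0)=\varsigma_0\}\leq\hat{\beta}\,\hat{\Upsilon}^{\,i}V(\chi_0)$; summing over $i\in\N_0$ and recalling that $k_0=0$ and that the windows $\{[k_i,k_{i+1})\}$ tile $\N_0$ yields $\sum_{k=0}^{\infty}\E\{V(x(k))\,|\,x(0)=\chi_0,g(0)=\varsigma_0\}\leq\frac{\hat{\beta}}{1-\hat{\Upsilon}}V(\chi_0)$, a bound uniform in $\varsigma_0$. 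Taking expectation over the initial state and using $\varphi_1(|x|)\leq V(x)\leq\varphi_2(|x|)$ together with $\E\{\varphi_2(|x(0)|)\}<\infty$ from Assumption~\ref{ass:bound_prob} then gives $\sum_{k=0}^{\infty}\E\{\varphi_1(|x(k)|)\}<\infty$ with $\varphi_1\in\mathscr{K}_\infty$, i.e.\ stochastic stability. I expect the main obstacle to be not any single computation but the bookkeeping that justifies conditioning everything on $g(k_i)$ --- that $g(k_i)\in\G$ a.s., that Lemma~\ref{lem:delta_g} furnishes the correct conditional law of $\Delta_i$, and that $\{(x,g)\}_{\mathcal{K}}$ (rather than $\{x\}_{\mathcal{K}}$) is the right Markov object --- after which the argument is a faithful copy of the proof of Theorem~\ref{theorem:a1_stability}.
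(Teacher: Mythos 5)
Your proposal is correct and follows exactly the route the paper intends: the paper omits the proof, stating only that it "follows as in the proof of Theorem~\ref{theorem:a1_stability}", and your write-up is a faithful expansion of that argument with the right adjustments (the aggregated Markov chain $\{(x,g)\}_{\mathcal{K}}$, the uniform contraction $\hat{\Upsilon}=\max_{\varsigma\in\G}\Upsilon_\varsigma<1$, and the inter-sample bound $\beta_\varsigma$, which correctly reduces to the paper's $\beta=1/(1-p_0\alpha)$ in the i.i.d.\ case).
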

The above generalizes the analysis in Section~\ref{sec:algorithm-a_1} to
situations where the processor availability for control is correlated. The
results of Section~\ref{sec:algorithm-a_2} can  be similarly extended.

\begin{rem}
   It is easy to see that the i.i.d.\ model of Assumption~\ref{ass:iid}
  corresponds to the special case of the Markovian model in
  Assumption~\ref{ass:ge}, obtained by setting $G=1$, $\G=\{1\}$, $Q=q_{11}=1$ and $p_l=p_{l|1}$,
  for all $l\in\{0,1,\dots,\Lambda\}$. With the above parameters,~\eqref{eq:75}
  and~\eqref{eq:54} 
  give that $\hat{p}_0=p_0$, $ \bar q_\varsigma = q_{11}=1$, $ \bar Q =p_0$ and $\bar p = 1-p_0$.
Thus, the term $\Upsilon_\varsigma =\Upsilon_1$ in~\eqref{eq:37}
becomes
  \begin{equation*}
  \begin{split}
    \Upsilon_1&= \frac{1-p_0}{(1-p_0\rho)(1-p_0\alpha  \big)} \bigg(
 \rho(1-p_0\alpha  \big) + \frac{  (\alpha-\rho)}{1-p_{0}}\sum_{l=1}^\Lambda
   p_{l}(p_0\rho )^\ell\bigg)\\&=\frac{(1-p_0) \sigma}{1-p_0\alpha},
   \end{split}
 \end{equation*}
where $\sigma$ is given in~\eqref{eq:86}. Therefore, for the i.i.d.\ case,
$\Upsilon_1<1$ if and only 
if~\eqref{eq:85} holds, and Theorem~\ref{thm:A1stabMarkov} reduces to
Theorem~\ref{theorem:a1_stability}. \hfs
\end{rem}

\section{Numerical examples}
\label{sec:numerical_examples}
Having established sufficient conditions for stochastic stability of the anytime
control loops, we next study performance issues.  For that purpose, we assume
that the execution time available is i.i.d., uniformly distributed in the 
interval $[0,1]$. {{The}} execution time can also be viewed as the
fraction of  the
maximum possible processor time that is available at any time
step. Denote the time taken to calculate one control input by $\tau\in(0,1)$. The
probability distribution of $\{N\}_{\N_0}$, see~\eqref{eq:7}, is then given by 
\begin{equation}
\label{eq:78}
 p_l=\tau,\quad  \forall l\in\{0,1,\dots, \Lambda-1\},\quad p_\Lambda = 1 -
 \Lambda \cdot \tau,
\end{equation}
where $\Lambda=\lfloor {1}/{\tau}\rfloor$ is the maximum number of control
  inputs that can be calculated at any time step. Throughout this
section, tentative controls 
in~\eqref{eq:4b} are obtained by evaluating $\kappa$ for the 
corresponding predicted plant state.
\par   To evaluate control performance, we
consider the empirical  cost  
\begin{equation*}
   J=\frac{1}{10^{5}}\E\left\{\sum_{k=0}^{10^5-1}\left(0.2x^{2}(k)+2u^{2}(k)\right)\right\}, 
\end{equation*}
 where  expectation is taken with respect to the availability of execution time as
described above.  

\par We first consider a
nonlinear plant model (adapted from~\cite{ntk}):
\begin{equation}
\label{eq:2}
  x(k+1)=x(k)+0.01(x^{3}(k)+u(k)) + w(k),
\end{equation}
where $w(k)$ is white noise uniformly distributed in the interval
$[0,0.01]$.
The baseline control policy is taken as $\kappa(x)=-x^{3}-x$. It can be
verified that if one chooses $V(x)=|x|$, then Assumption~\ref{ass:CLF} is
satisfied for $\varphi_1(s)=\varphi_2(s)=s$  and $\rho=0.99$. 
 Fig.~\ref{fig:1} shows the percentage improvement in cost achieved as a
function of  the time taken to calculate one control input  for both algorithms A$_1$
and A$_2$, as compared to the baseline algorithm~\eqref{eq:4}. It can be
appreciated in that figure, both algorithms proposed give a significant
performance  improvement, with Algorithm A$_2$ further outperforming Algorithm
A$_1$.\footnote{A total of 1000 Monte Carlo simulations were used to generate
  the data. Of course, the obtained results are no more
than a case-by-case analysis, and consequently one cannot
conclude anything about the superiority of either algorithm in general.}
 



\begin{figure}[t]
  \centering
  \includegraphics[scale=0.48]{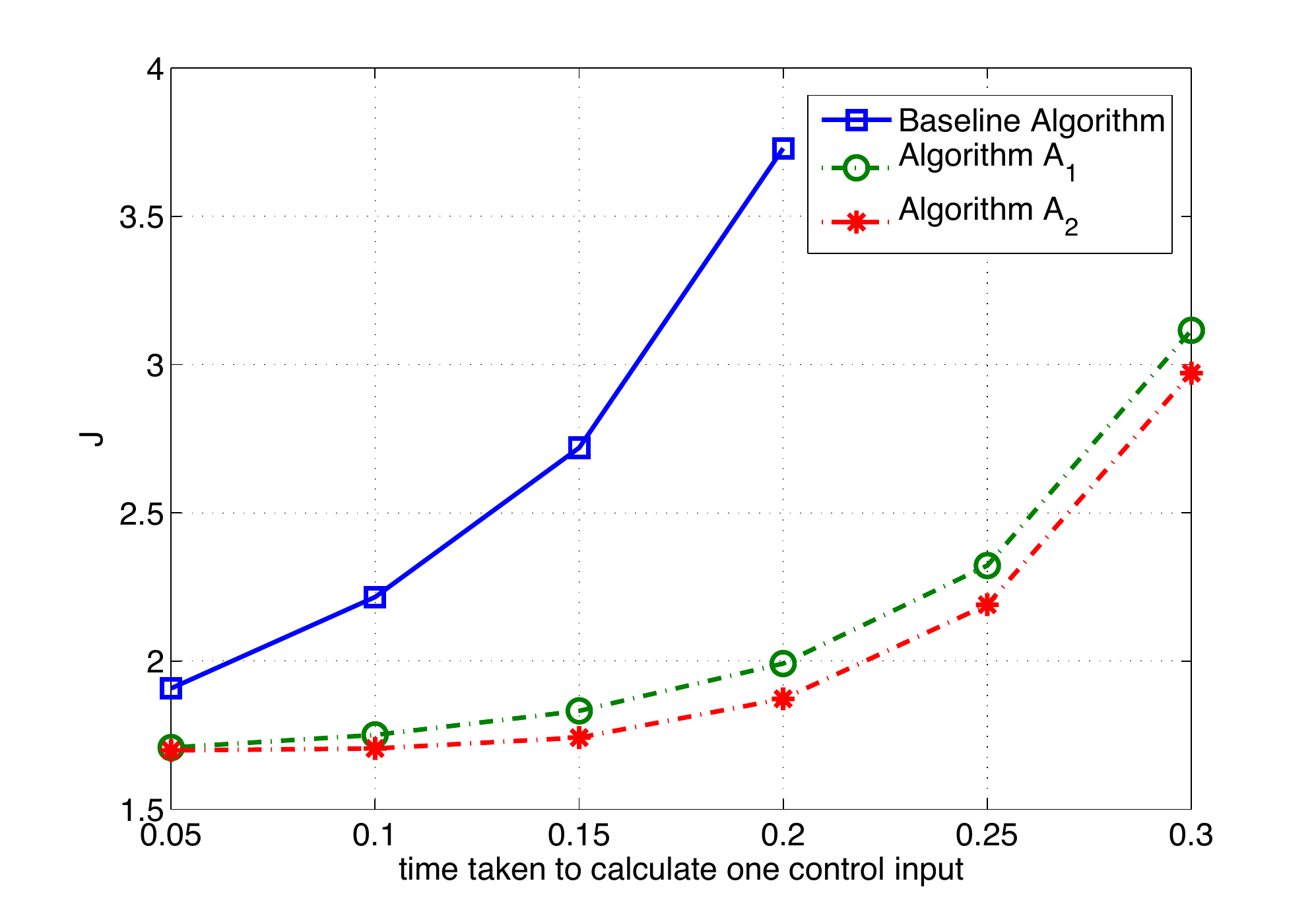}
  \caption{Empirical cost achieved when controlling the nonlinear plant model~(\ref{eq:2}) with
    the proposed anytime algorithms and 
     the baseline algorithm~\eqref{eq:4}, as a function of $\tau$, the execution
    time required 
    to calculate 
    one control input, see~\eqref{eq:78}.}
  \label{fig:1}
\end{figure}



\par As the plant model becomes more open-loop unstable, the proposed algorithms can
be expected to give higher performance gains. Figure~\ref{fig:2} illustrates this intuitive effect for the linear
model
\begin{equation}
\label{eq:39}
  x(k+1)=a x(k)+u(k)+w(k),
\end{equation}
with system parameter $a\in [0.5,1.5]$, and where  $w(k)$ is i.i.d,
Gaussian with zero mean and variance 
0.1. The  policy $\kappa$ is taken
as the associated LQR control law; $\{N\}_{\N_0}$ is distributed as in~\eqref{eq:78} with $\tau=0.3$.  The percentage improvement is plotted for
algorithms $\textsc{A}_{1}$ 
and $\textsc{A}_{2}$,
as compared to the baseline algorithm~\eqref{eq:4}. 
\begin{figure}[t]
  \centering
  \includegraphics[scale=0.42]{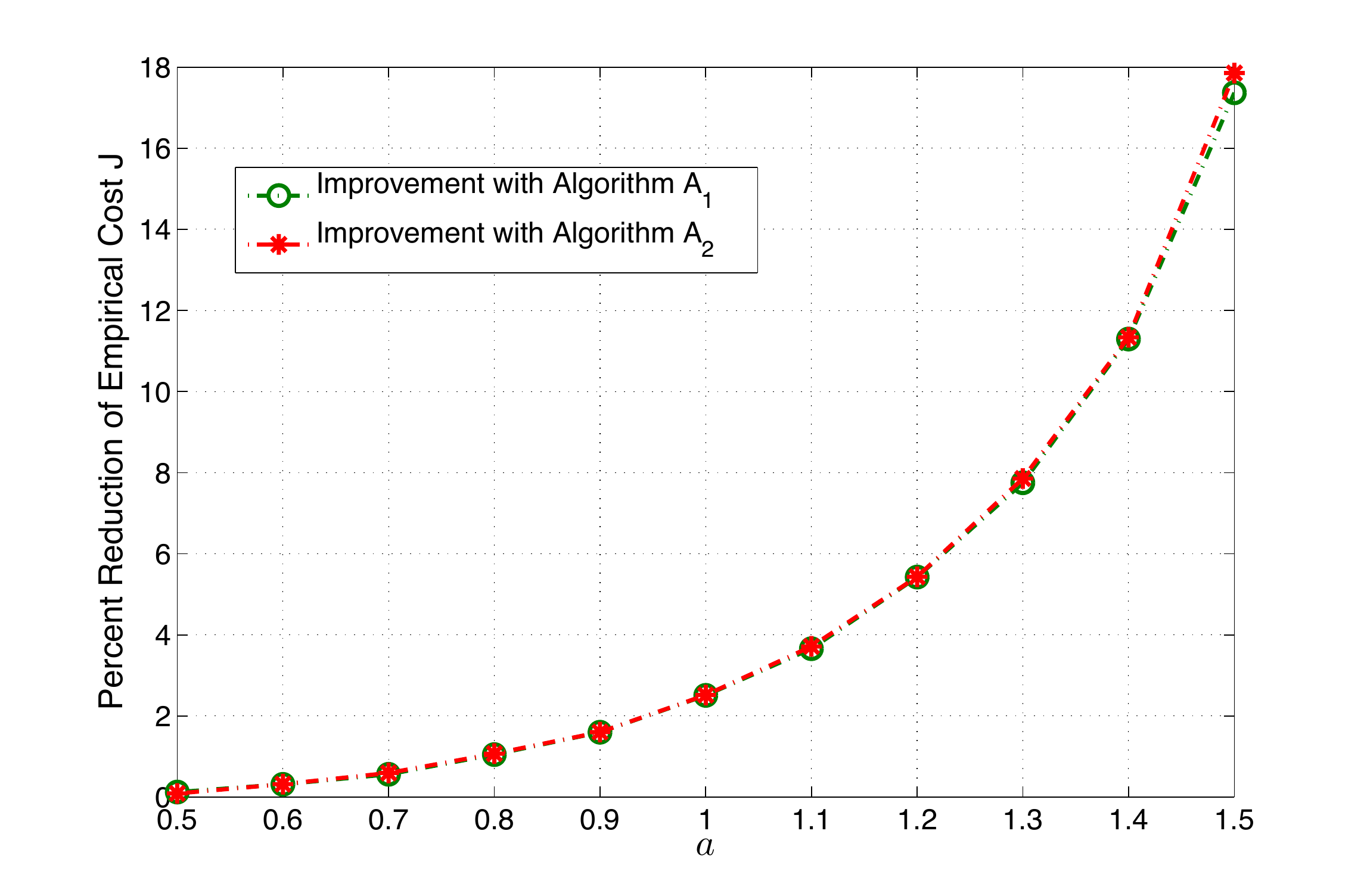}
  \caption{Performance improvement using algorithms A$_1$ and A$_2$ when compared to the
    baseline algorithm~\eqref{eq:4}, for the  model~\eqref{eq:39}. }
  \label{fig:2}
\end{figure}

\par We finally examine the effect of artificially limiting the
maximum buffer size. In particular, if the buffer size is taken as $1$, then one
recovers the baseline algorithm~\eqref{eq:4}; as noted in Section~\ref{sec:algorithm-a_2},
with size 2, Algorithms A$_1$ and A$_2$ are equivalent. Fig.~\ref{fig:3}
illustrates empirical results for a linear plant~\eqref{eq:39} with $a=1.7$. The
processor availability is as per~\eqref{eq:78} with $\tau = 0.23$, thus,
$p_0=p_1=p_2 = p_3=0.23$ and $p_4=0.08$.  Allowing the buffer
size to be of size 4 gives the best results, although a buffer of size 3
gives almost optimal performance.

\begin{figure}[t]
  \centering
  \includegraphics[scale=0.48]{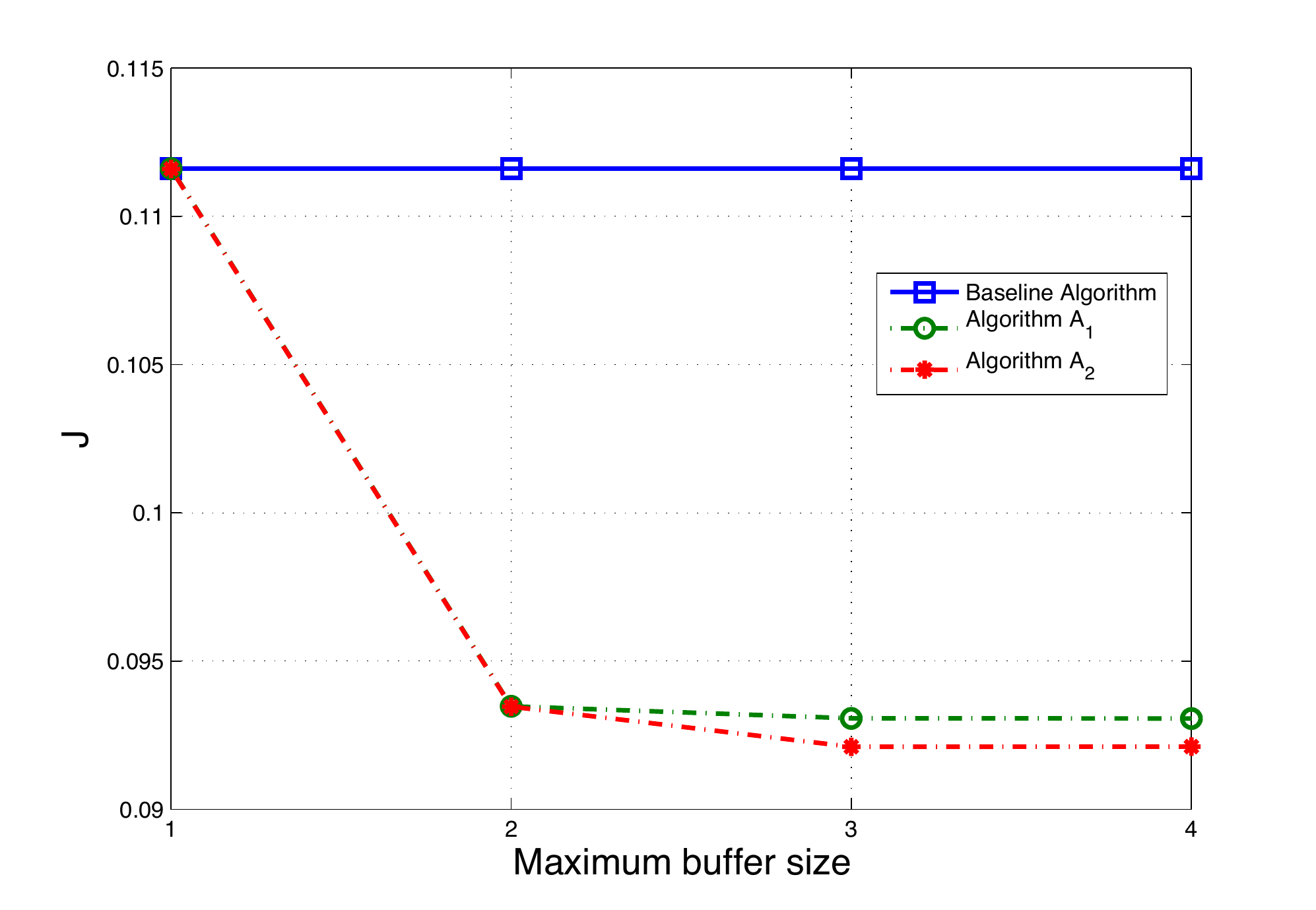}
  \caption{Effect of limiting the maximum buffer size  for the
    model~\eqref{eq:39} with $a=1.7$.}
  \label{fig:3}
\end{figure}

\section{Conclusions}
\label{sec:conclusions}
We proposed two related anytime control algorithms for general nonlinear
processes. The algorithms use available processing resources to compute
sequences of tentative control inputs. 
Thus, even if the processor does not provide sufficient resources at some time steps, the
effect can be partially compensated for. For general non-linear systems, we established
sufficient conditions for stochastic stability. Simple numerical examples indicate that the performance
gains with the 
proposed algorithms can be significant, when compared to a simple baseline algorithm.
 Future work could include examining situations where system assumptions hold only
locally, using the stability and performance
characterizations obtained for processor scheduling, and the development of
anytime algorithms for distributed systems. 



\paragraph*{Acknowledgements}
The authors would like to thank the anonymous reviewers for their valuable comments and suggestions to improve the paper. Research
    supported for the first author under Australian Research Council's Discovery Projects funding  scheme (project number DP0988601) and in part for the second author  by NSF awards 0846631 and
    0834771.

\bibliographystyle{IEEEtran}


%

\begin{biography}[{\includegraphics[width=1in,height=1.25in,clip,keepaspectratio]{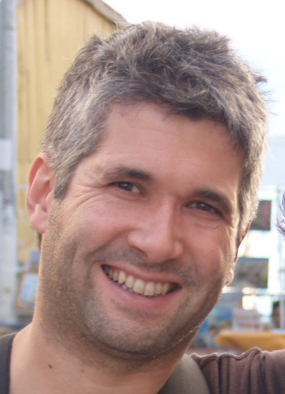}}]{\bf Daniel Quevedo} (S'97--M'05) received Ingeniero Civil Electr\'onico
and Magister en Ingenier\'{\i}a Electr\'onica degrees from the Universidad
T\'ecnica Federico Santa Mar\'{\i}a, Valpara\'{\i}so, Chile in 2000.  In
2005, he received the Ph.D.~degree from The University of Newcastle,
Australia, where he is currently an Associate Professor.  He has been a
visiting researcher at various institutions, including Uppsala University,
Sweden, KTH Stockholm, Sweden, Aalborg University, Denmark, Kyoto University,
Japan, and INRIA Grenoble, France.

\par Dr.\ Quevedo was supported by a full scholarship from the alumni
association during his time at the Universidad 
T\'ecnica Federico Santa Mar\'{\i}a and received several university-wide
prizes upon graduating. He received the IEEE Conference on Decision and
Control Best Student Paper Award in 2003 and was also a finalist  in 
2002.  In
2009, he was awarded an Australian Research Fellowship.   His  research
interests include several areas of automatic control, signal processing, and power
electronics. 
\end{biography}

\begin{biography}[{\includegraphics[width=1in,height=1.25in,clip,keepaspectratio]{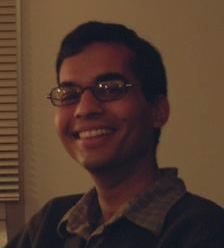}}]{Vijay
    Gupta} 
Vijay Gupta is an Assistant Professor in the Department of Electrical Engineering at the University of Notre Dame. He received his B. Tech degree from the Indian Institute of Technology, Delhi and the M.S. and Ph.D. degrees from the California Institute of Technology, all in Electrical Engineering. He has served as a research associate in the Institute for Systems Research at the University of Maryland, College Park, and as a consultant to the Systems Group at the United Technology Research Center, Hartford, CT.  His research interests include various topics at the interaction of communication, computation and control. He received the NSF Career award in 2009.
\end{biography}

\end{document}